\documentclass{article}

  \usepackage[preprint]{neurips_2026}

\usepackage[utf8]{inputenc} 
\usepackage[T1]{fontenc}    
\usepackage{hyperref}       
\usepackage{url}            
\usepackage{booktabs}       
\usepackage{amsfonts}       
\usepackage{nicefrac}       
\usepackage{microtype}      
\usepackage{xcolor}         
\usepackage{float}

\usepackage{graphicx}
\usepackage{subcaption}

\usepackage{amsmath}
\usepackage{amssymb}
\usepackage{mathtools}
\usepackage{amsthm}
\usepackage{multirow}

\usepackage[capitalize,noabbrev]{cleveref}

\theoremstyle{plain}
\newtheorem{theorem}{Theorem}[section]
\newtheorem{proposition}[theorem]{Proposition}
\newtheorem{lemma}[theorem]{Lemma}
\newtheorem{corollary}[theorem]{Corollary}
\theoremstyle{definition}
\newtheorem{definition}[theorem]{Definition}

\theoremstyle{remark}
\newtheorem{remark}[theorem]{Remark}

\usepackage[textsize=tiny]{todonotes}

\newcommand{\E}{\mathbb{E}}

\newcommand{\Var}{\text{Var}}

\title{Random Neural Network Expressivity for Non-Linear Partial Differential Equations}

\author{
 Muhammed Ali Mehmood \\ Department of Mathematics \\ Imperial College London \\ UK \\   \texttt{muhammed.mehmood21@imperial.ac.uk}
\And
Lukas Gonon \\ School of Computer Science \\ University of St. Gallen \\ Switzerland \\
\texttt{lukas.gonon@unisg.ch}}

\begin{document}

\maketitle

\begin{abstract}
Neural networks with randomly generated hidden weights (RaNNs) have been extensively studied, both as a standalone learning method and as an initialization for fully trainable deep learning methods. In this work, we study RaNN expressivity for learning solutions to non-linear partial differential equations (PDEs). Despite their widespread use in practical applications, a rigorous theoretical understanding of the approximation properties of RaNNs in this context remains limited.
Here, we derive error bounds for RaNN approximations to time-dependent Sobolev functions and obtain a dimension-free approximation rate $\frac{1}{2}$ for sufficiently regular functions. 
We apply our results to two important classes of non-linear PDEs: Porous Medium Equations and Compressible Navier-Stokes Equations, showing that RaNNs are capable of efficiently approximating solutions to these complex, non-linear PDEs. Our theoretical analysis is supported by numerical experiments, showing that the obtained convergence rates extend beyond the considered setting.
\end{abstract}

\section{Introduction}
Partial Differential Equations (PDEs) are foundational to our understanding of the natural world, with applications across all areas of science and engineering. Many complex phenomena are modelled by non-linear PDEs (e.g.\ Navier-Stokes, Schr\"odinger, Porous medium equations), which exhibit disorderly behaviour that renders them intractable to classical analytic/numerical approaches. Therefore, it is crucial to develop numerical methods for solving non-linear PDEs efficiently. In the past years, a variety of deep learning methods for solving PDEs have been introduced and analysed. Neural networks with randomly generated hidden weights (RaNNs) play an important role in many of these methods; either as standalone learning method or as initialization for fully trainable deep neural networks. In both cases, when employing these methods, a precise understanding of the approximation error is crucial for controlling the overall error.  

In this paper, we are concerned with the expressivity of RaNNs for learning solutions of non-linear PDEs. To tackle this problem, we derive approximation error bounds for time-dependent Sobolev functions, which encompass the solution spaces for many important non-linear PDEs. Our obtained bounds show that sufficiently regular Sobolev functions can be approximated by RaNNs at a dimension-independent rate of $\frac{1}{2}$. 
We then apply our results to two important classes of non-linear PDEs: Porous Medium Equations (PME) and Compressible Navier-Stokes Equations. We show that, with high probability, the training error for Physics-Informed Neural Networks (PINNs) \citep{raissi2019physics} converges to $0$ for these PDEs. In particular, our results provide quantitative approximation guarantees for RaNN-based PINNs for learning non-linear PDEs, as have been extensively studied in computational experiments (cf.\ the references below).  We complement our theoretical analysis by numerical experiments for the two benchmark PDEs, validating the obtained convergence rates and showing that these rates may be valid also beyond the considered setting. 


\subsection{Related Works}
In recent years, a variety of deep learning-based methods for solving PDEs have been introduced, 
addressing the challenges of classical mesh-based methods such as finite difference methods. 
Seminal works include \cite{SirignanoSpiliopoulos2017}, \cite{EHanJentzen2017CMStat} \cite{raissi2019physics}, \cite{EYu2017}. We refer, e.g., to the survey articles \cite{beck2020overview, Germainetal2021, MR4457972, Gononetal2024} for an extensive overview and further references on deep learning methods for PDEs and their theoretical foundations. 

PINNs \citep{raissi2019physics} constitute a flexible and widely applicable deep learning-based approach for solving PDEs. PINNs reframe the problem as training a neural network to solve the PDE, by minimising a loss function that encodes the PDE residual along with the boundary/initial conditions.
While this approach has been demonstrated to be highly effective in many settings \cite{MR4412280,HuShukla2024}, for non-linear PDEs the loss landscape may become exceptionally complex. 
This has motivated the use of RaNN-based PINN methods, for which several recent studies have carried out extensive empirical experiments, see, e.g., \cite{dwivedi2020physics,shang2023randomized, shang2024randomized, sun2024local,wang2024extreme,ying2024accurate,doi:10.1177/10812865251362165,datar2024fast,chen2022bridgingtraditionalmachinelearningbased,Nelsen2020}.  

RaNNs \cite{huang2006extreme,rahimi2007random,rahimi2008uniform} are neural networks with randomly generated hidden weights. RaNNs have been used both as standalone learning methods and as means for studying the effects of random initialization for neural networks trained using gradient-based optimization \cite{braun2024convergence,CRR2018}. Generalization properties of random feature models have been studied in \cite{RudiRosasco2017,MM19,2023ErrorBoundsForLearningWithVectorValuedRandomFeatures,2023ATheoreticalAnalysisOfTheTestErrorOfFiniteRankKernelRidgeRegression}. In addition to RaNN-based PINNs, many other RaNNs-based methods have been developed for solving PDEs \cite{Nelsen2020,gonon2023random,jacquier2023random,neufeld2025full}. More broadly, RaNNs and related random feature models have demonstrated state-of-the-art performance and speed across various tasks \cite{2023SWIMSamplingWeightsNeuralNetworks, 2023Hydra, gattiglio2024randnet, 2024RandomRepresentationsOutperformOnlineContinuallyLearnedRepresentations,zozoulenko2025random}. 

The reduced number of trainable parameters of RaNNs in comparison to fully trainable models results in a simpler training phase with a reduced computational cost, potentially at the expense of lower expressivity. Therefore, a precise theoretical understanding of RaNN approximation capabilities is crucial. 
Quantitative approximation properties of RaNNs for functions in the associated reproducing kernel Hilbert space have been studied in \cite{rahimi2008uniform,JMLR:v18:14-546,sun2018approximation}. For smoothness-based function classes, RaNN approximation error bounds were derived in \cite{gonon2023approximation, gonon2023random} using Barron-type representations and further extended in \cite{neufeld2023universal,de2025approximation}. In the context of PDEs, \cite{gonon2023random} obtains a full RaNN learning error analysis free from the curse of dimensionality  for a class of linear PDEs. In all these results, the random weight distribution is fixed (e.g.\ a uniform, normal or Student-t distribution). In contrast, RaNNs also appear as a means of proof for deriving deterministic approximation bounds \cite{barron2002universal,Barron1994ApproximationAE,BarronKlusowski2018,SiegelXu2020}, with RaNN weight distributions depending on the function to be approximated.

While the approximation results in  \cite{gonon2023approximation, gonon2023random} and \cite{neufeld2023universal,de2025approximation} allow to control the RaNN approximation errors in uniform, mean-squared or Sobolev-norms, respectively, in the context of non-linear PDEs these results would either require strict information on the solution (e.g.\ finiteness in Barron-ridgelet norms and decay on the Fourier transform of $u$) which is typically not known, or the bounds would be applicable only for approximating PDE solutions at a fixed point in time. However, for time-dependent non-linear PDEs, solutions often have significantly different behaviour in time versus space (e.g.\ solutions to Navier-Stokes or semi-linear heat equations). In contrast, our results allow to handle time-dependent functions in mixed Sobolev spaces, as arise in the context of non-linear PDEs. This is especially important because neural networks with randomised architectures have proven to be very effective, even for complex tasks such as solving non-linear PDEs \cite{datar2024fast}.

\subsection{Contributions}
In this paper, we provide RaNN approximation error bounds tailored to the context of time-dependent, non-linear PDEs. We will denote the width of a RaNN by $N$. Our paper makes the following contributions:  
\begin{enumerate} 
    \item \textbf{RaNN approximation bound with dimension-independent rate $N^{-1/2}$:} We derive RaNN approximation error bounds for sufficiently regular time-dependent Sobolev functions (Theorem~\ref{thm1}). These functions encompass the solution spaces for many important non-linear PDEs. Our unbiased RaNN estimator approximates functions in mixed Sobolev norms $H^p_t H^q_x$ at the rate $N^{-1/2}$ independently of dimension, while only requiring minimal extra regularity in time and space.
    \item \textbf{Implications for non-linear PDEs:} We showcase the implications of our bounds on two important classes of non-linear PDEs: Porous Medium Equations (PME) and Compressible Navier-Stokes Equations. 
    In particular, we obtain RaNN approximation error bounds for the PINN training error in these cases. Our results are supplemented by numerical simulations validating the obtained convergence rates. 
\end{enumerate}
To prove these results, we obtain a specific ridgelet-based representation for $L^2$ functions (Proposition \ref{prop:u-rep}) and a higher-order Plancherel-type estimate (Lemma~\ref{lemma:parseval}) that connects Sobolev regularity of $u$ with its ridgelet transform.
Our obtained bounds may serve as building block for generalization error analyses of RaNNs for PINN-based learning as obtained for deterministic networks in \cite{MR4793683,Mishraestimatepde2022,MR4565573,ar2409.17938}.

\section{Preliminaries}

\subsection{Random Neural Networks}
A random neural network (RaNN) is a fully connected neural network with one hidden layer in which the weights are randomly sampled, leaving only the output weights trainable. 
In this paper, we are interested in studying solutions to PDEs, which typically treat time as a separate dimension. Therefore we will 
consider time-dependent random neural networks of the following form.
\begin{definition} \label{def:rann}
A time-dependent random neural network of width $N$ is a function $u^{\tau, \mathbf{a}, \mathbf{b}}_W : \mathbb{R} \times \mathbb{R}^d \to \mathbb{R}$ with
 \begin{equation} \label{RANN}
     u_W^{\tau,\mathbf{a}, \mathbf{b}}(t,x) = \sum_{i=1}^N W_i \sigma (\tau_i t + \mathbf{a}_i \cdot x + b_i),
 \end{equation}
where $\sigma : \mathbb{R} \to \mathbb{R}$ is an activation function, $\mathbf{a}_1, ..., \mathbf{a}_N$ are $\mathbb{R}^d$-valued i.i.d.\ random variables, $ b_1, ..., b_N $ are i.i.d.\ random variables in $\mathbb{R}$, $ \tau_1, ..., \tau_N $ are i.i.d.\ random variables in $\mathbb{R}$  and $W_1, ..., W_N$ are trainable weights.\footnote{Formally, $W_i = g_i(\mathbf{a}_1, ..., \mathbf{a}_N,b_1,\ldots,b_N)$ for measurable functions $g_i$. }
\end{definition}
A RaNN is used as learning system by optimizing the weights $W_1, ..., W_N$ of $u_W^{\tau,\mathbf{a}, \mathbf{b}}$ with respect to a given loss function. 

\subsection{Physics-informed machine learning}
Physics-informed neural networks (PINNs) have been introduced in \cite{raissi2019physics} as an unsupervised learning method for solving partial differential equations. 
PINNs approximate the solution $u$ to a PDE $\mathcal{L}[u] = 0$  by a neural network $u_\theta$, with $\theta$ representing the trainable parameters. The solution is approximated by minimising a loss function $\mathcal{J}[u_\theta]$ encoding the structure of the PDE --- including initial/boundary conditions --- at collocation points $\{t_p^i, x_p^i\}_{i=1}^{M}$ in the interior domain $(0,T) \times D$, as well as $\{ x_{ic}^i\}_{i=1}^{M}$ , $\{t_{bc}^i\}_{i=1}^{M}$ on the slices $\{t=0\} \times D$ and $(0,T) \times \partial D$ on which the initial data / boundary conditions are defined, respectively.  
For example, if we have a PDE on the 1D domain $(0,T) \times (a,b)$ given by $\mathcal{L}[u] = 0$ with initial condition $u(0,\cdot) = u_0$ and boundary conditions $u(t,a) = u(t,b)$, the PINN loss function is  given by
\begin{equation*}
\begin{aligned}
    \mathcal{J}[u_\theta] &:= \frac{1}{M} \sum_{i=1}^M |\mathcal{L}[u_\theta]|^2 (t_p^i, x_p^i) + \frac{1}{M} \sum_{i=1}^M |u_\theta(0, x_{ic}^i) - u_0(x_{ic}^i)|^2 + \frac{1}{M} \sum_{i=1}^M |u_\theta(t_{bc}^i, b) - u_\theta(t_{bc}^i, a) |^2.
\end{aligned}
\end{equation*}
The parameters $\theta$ are then iteratively updated using a stochastic optimization algorithm. 

\subsection{Ridgelet transform} 
The ridgelet transform $\mathcal{R}_{\psi}u$ of $u : \mathbb{R} \times \mathbb{R}^d \to \mathbb{R}$ with respect to $\psi : \mathbb{R} \to \mathbb{R}$ is given by
\begin{equation} \label{ridge-defn}
       \mathcal{R}_\psi u (\tau, \mathbf{a}, b) := \int_{\mathbb{R}^{d+1}} u(t,\mathbf{x})\psi(\tau t +\mathbf{a}\cdot \mathbf{x}-b)\|(\tau, \mathbf{a})\|^s~dtd\mathbf{x},
    \end{equation} 
    where $\tau \in \mathbb{R}, \mathbf{x} \in \mathbb{R}^d, b \in \mathbb{R}.$
    The factor $\|(\tau, \mathbf{a})\|^s$ appears for convenience in the literature. We will take $s=0$ in this paper. Then the dual ridgelet transform $R_\eta^\dagger T$ of $T:\mathbb{R}^{d+2} \to \mathbb{R}$ with respect to $\eta : \mathbb{R} \to \mathbb{R}$ is defined as \begin{equation} \begin{aligned}
        &\mathcal{R}_\eta^\dagger T(t,\mathbf{x}) := \int_{\mathbb{R}^{d+2}} T(\tau, \mathbf{a},b) \eta(\tau t +\mathbf{a}\cdot \mathbf{x}-b)\|(\tau, \mathbf{a})\|^{-s} d\tau d\mathbf{a} db.
        \end{aligned}
    \end{equation}
The ridgelet transform originates from harmonic analysis in the work of Candès and Donoho \cite{candes1998ridgelets,candes1999ridgelets}. Ridgelets can be described as wavelets in the Radon domain. More precisely, the ridgelet transform is obtained by applying a one-dimensional wavelet transform to the Radon transform of a function. This construction is particularly suited to representing functions with singularities concentrated along hyperplanes, in contrast to classical wavelets which are better suited to point singularities.
Ridgelets are also closely related to ridge functions of the form $x \mapsto \varphi(a \cdot x - b)$ \cite{donoho2001ridge}, which can be interpreted as a single layer neural network. This connection underlies a second area of work linking ridgelet transforms to integral representations of neural networks and their approximation properties. We refer to \cite{murata1996integral, candes1999harmonic, ito1991representation} for early works that developed this research direction. Recent works \cite{unser2023ridges, ongie2019function, sonoda2017neural} have further exploited the connection between ridgelet formulations and neural networks to study approximation rates / reconstruction properties for a broad class of activation functions.
We refer to \cite{sonoda2017neural} for a more comprehensive overview of the ridgelet transform and its properties.
In this paper, we extend the ridgelet-based approximation framework to time-dependent Sobolev spaces, which are the natural function spaces for solutions to evolutionary PDEs. We also show how approximation bounds obtained via ridgelet techniques can be propagated to bounds on the PDE residual, which is the quantity minimised in PINN-type methods.

\subsection{Notation and structure of the paper}
We adopt the shorthand notation $X_t Y_x$ for the Bochner space $X(\mathbb{R}; Y(\mathbb{R}^d))$ (or $X(0,T; Y(D))$, depending on the context). For example, we may write $H^p_t H^q_x$  for the space $H^p(\mathbb{R}; H^q(\mathbb{R}^d))$. We write $\widehat{f}$ to denote the Fourier transform of $f$, with the convention $\widehat{f}(\omega) = \int e^{-i\omega x} f(x)~dx$. We also denote by $\|\cdot\|$ the $\ell^2$ norm. In Section \ref{sec:main}, we prove a representation formula for general time-dependent Sobolev functions, before proving a key inequality that connects the regularity of the ridgelet transform to the original function. We use this to derive our main result Theorem \ref{thm1}. In Section \ref{sec:PDE} we then apply Theorem~\ref{thm1} to RaNN-PINN approximators of two benchmark PDEs. Finally, some numerical experiments are carried out which validate the $N^{-1/2}$ convergence rate. 
\section{RaNN approximations of time-dependent Sobolev functions} \label{sec:main}
Our first step is to obtain an integral representation for $u$ based on the ridgelet transform. This representation will be used later to derive RaNN approximation error bounds for $u$.
\subsection{Obtaining an integral representation}
We first introduce the Lizorkin distribution space $\mathcal{S}_0'(\mathbb{R})$, which is the dual space of $\mathcal{S}_0(\mathbb{R})$; the space of Lizorkin functions. $\mathcal{S}_0(\mathbb{R})$ is a closed subspace of the space of Schwartz functions $\mathcal{S}(\mathbb{R})$ and contains the functions $f \in \mathcal{S}(\mathbb{R})$ such that all moments vanish, i.e. $\mathcal{S}_0(\mathbb{R}^d) = \{ f \in \mathcal{S} : \int_{\mathbb{R}^d} \mathbf{x}^\alpha f(\mathbf{x})~d\mathbf{x}=0 \text{ for any } \alpha \in \mathbb{N}_0^d\}$. The Lizorkin distribution space $\mathcal{S}_0'(\mathbb{R})$ itself includes many common activations such as $\tanh$, $\mathrm{sigmoid}$ and $\mathrm{ReLU}$. We refer to \cite{sonoda2017neural} for a more detailed description of the $\mathcal{S}_0'(\mathbb{R})$ space. In this paper, we will consider activation functions $\sigma : \mathbb{R} \to \mathbb{R}$ that belong to the following subspace of $\mathcal{S}_0'(\mathbb{R})$ for some $k \ge 0$.
\begin{definition} \label{def:Tk}
   Let $k \in \mathbb{N}_0$. We say $\sigma \in \mathcal{S}_0'(\mathbb{R})$ belongs to $\mathcal{T}_k$ if (i) there exists $C_1 > 0$ such that
    \begin{equation}
        \sum_{j=0}^k |\sigma^{(j)}(x)| \le C_1 \quad \forall x \in \mathbb{R},
    \end{equation}
 and (ii) there exists $\delta>0$ and $\beta \in \mathbb{N}_0$ such that $\zeta^\beta \widehat{\sigma}(\zeta) \in C(-\delta, \delta)$ and for any $\alpha \in \mathbb{N}_0$\begin{equation}
        J_\sigma :=    \int_\mathbb{R} \zeta^{2\alpha+\beta} \widehat{\sigma}(\zeta)e^{-\zeta^2/2}~d\zeta  \ne 0. 
    \end{equation}
    
\end{definition}

 Examples of admissible activation functions for which our results hold are $\tanh, \cos$ and $\mathrm{sigmoid}$ (each in $\mathcal{T}_k$ for all $k \ge 0$). This is shown in Remark \ref{rmk:tanh} in the appendix. For $\mathcal{S}(\mathbb{R})$ and $m \in \mathbb{N}$ we also define the following (possibly infinite) admissibility constant 
  \begin{equation} \label{psi-condition}
    A_{\psi,m} :=\int_{-1}^1 |\widehat{\psi}(\omega)|^2 |\omega|^{-m} ~d\omega.
    \end{equation} 

We now mention the following integral representation formula, which directly follows from results obtained by \cite{sonoda2017neural} using the theory of ridgelet transforms.
\begin{proposition} \label{prop:u-rep}
    Let $m \ge 0$ and $\sigma \in \mathcal{T}_k$ for some $k \ge 0$. A function $u \in L^1(\mathbb{R}; L^1(\mathbb{R}^d))$ can be expressed as
     \begin{equation} \label{u-representation}
        u(t,\mathbf{x}) = \int_\mathbb{R} \int_{\mathbb{R}^{d}} \int_\mathbb{R} (\mathcal{R}_\psi u) (\tau, \mathbf{a}, b) \sigma (\tau t+ \mathbf{a}\cdot \mathbf{x}-b) ~d\tau d\mathbf{a}db,
    \end{equation}
    where $\mathcal{R}_\psi u $ is the ridgelet transform of $u$ with respect to a Schwartz function $\psi \in \mathcal{S}(\mathbb{R})$ such that 
    \begin{equation} \label{psi-growth}
        |\widehat{\psi}(\omega)| \le C|\omega|^{m}  \qquad \forall ~|\omega|<1,
    \end{equation} for some $C>0$ independent of $u$,
    and therefore $A_{\psi, m}<+\infty$. We also have that $(\sigma, \psi)$ is an admissible pair in the sense of \cite{sonoda2017neural}, meaning that the following constant is finite and non-zero:
    \begin{equation}
        K_{\psi, \sigma} := (2\pi)^{d-1} \int_\mathbb{R} \frac{\hat{\psi}(\zeta) \hat{\sigma}(\zeta)}{|\zeta|^m} ~d\zeta.
    \end{equation}
\end{proposition}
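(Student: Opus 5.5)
The plan is to reduce Proposition~\ref{prop:u-rep} to the reconstruction formula of \cite{sonoda2017neural}. That theorem states: for an admissible pair $(\psi,\sigma)$ with $\psi\in\mathcal{S}(\mathbb{R})$ and $\sigma\in\mathcal{S}_0'(\mathbb{R})$, and any $f\in L^1(\mathbb{R}^{n})$, one has $\mathcal{R}^\dagger_\sigma\mathcal{R}_\psi f = K_{\psi,\sigma} f$, where the identity holds a.e.\ and the dual integral is understood as an iterated/oscillatory limit. We apply it with $n=d+1$, the variable $(t,\mathbf{x})$, weight exponent $s=0$, and the normalization $\|(\tau,\mathbf{a})\|^{0}=1$ used in the paper.

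First, note that $L^1(\mathbb{R};L^1(\mathbb{R}^d))=L^1(\mathbb{R}^{d+1})$ by Fubini. The ridgelet transform $\mathcal{R}_\psi u$ is therefore well defined and bounded by $\|u\|_{L^1}\|\psi\|_\infty$.

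The main work is constructing $\psi$ so that three things hold at once: \eqref{psi-growth}, finiteness of $K_{\psi,\sigma}$, and $K_{\psi,\sigma}\neq 0$. Guided by condition (ii) of Definition~\ref{def:Tk}, I would define $\psi$ through its Fourier transform,
\[
\widehat{\psi}(\zeta) := \zeta^{2\alpha+\beta}\,|\zeta|^{m}\,e^{-\zeta^2/2}\cdot\overline{(\cdot)},
\]
and more concretely take $\widehat\psi(\zeta)=\zeta^{2\alpha+\beta+2m'}e^{-\zeta^2/2}$. Here $m'$ is an integer with $2m'\ge m$ and $m'$ is even-adjusted so that $\zeta^{2m'}/|\zeta|^{m}$ is a nonnegative even function. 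When $m$ is an even integer, simply $\widehat\psi(\zeta)=\zeta^{2\alpha+\beta+m}e^{-\zeta^2/2}$ with $|\zeta|^m=\zeta^m$.

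This choice of $\widehat\psi$ is polynomial times Gaussian, so $\psi\in\mathcal{S}(\mathbb{R})$. The bound $|\widehat\psi(\omega)|\le |\omega|^{m}$ for $|\omega|<1$ is immediate, and it gives $A_{\psi,m}=\int_{-1}^1|\widehat\psi|^2|\omega|^{-m}d\omega\le\int_{-1}^1|\omega|^{m}d\omega<\infty$. We then have
\[
K_{\psi,\sigma}=(2\pi)^{d-1}\int_{\mathbb{R}}\zeta^{2\alpha+\beta}\widehat\sigma(\zeta)e^{-\zeta^2/2}\,d\zeta=(2\pi)^{d-1}J_\sigma,
\]
which is nonzero by (ii). When $m$ is not an even integer, I would absorb the leftover factor by applying (ii) with a suitable $\alpha$. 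Alternatively, one can let $\widehat\psi(\zeta)=|\zeta|^{m}\zeta^{2\alpha+\beta}e^{-\zeta^2/2}\chi(\zeta)$, smoothed near $0$, and check non-vanishing by continuity.

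\textbf{Main obstacle.} The delicate step is making sense of $K_{\psi,\sigma}$ as an integral, since $\widehat\sigma$ is only a Lizorkin distribution. Away from the origin, $\widehat\sigma$ paired with the Schwartz function $\widehat\psi/|\zeta|^m$ is fine because $\widehat\psi$ vanishes to high order at $0$ and so lies in $\mathcal{S}_0$. Near the origin, I would use the hypothesis that $\zeta^\beta\widehat\sigma$ is continuous on $(-\delta,\delta)$, together with the factor $\zeta^{2\alpha+\beta}$ in $\widehat\psi$, to get local integrability. Once admissibility is established, Sonoda--Murata's theorem yields $\mathcal{R}^\dagger_\sigma\mathcal{R}_\psi u=K_{\psi,\sigma}u$. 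Replacing $\psi$ by $\psi/K_{\psi,\sigma}$ preserves all the stated properties, with $C$ rescaled, and gives \eqref{u-representation} with normalized constant. The triple integral is interpreted in the sense used in \cite{sonoda2017neural}.
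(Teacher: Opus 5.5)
\textbf{The gap is in which constant you make nonzero.} You invoke Sonoda--Murata's reconstruction theorem on $\mathbb{R}^{d+1}$. That theorem gives $\mathcal{R}^\dagger_\sigma\mathcal{R}_\psi u=K\,u$ with $K=(2\pi)^{d}\int_{\mathbb{R}}\overline{\widehat{\psi}(\zeta)}\,\widehat{\sigma}(\zeta)\,|\zeta|^{-(d+1)}\,d\zeta$, up to the conjugation convention. The exponent comes from the change of variables $(s,\xi)=\omega(\tau,\mathbf{a})$, whose Jacobian is $|\omega|^{-(d+1)}$. So it is the input dimension $d+1$, not the growth parameter $m$ of \eqref{psi-growth}. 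The $|\zeta|^m$ in the displayed $K_{\psi,\sigma}$ is a notational clash with Sonoda--Murata, whose $m$ is the dimension, and you tuned $\psi$ to it.

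For your $\widehat{\psi}(\zeta)=\zeta^{2\alpha+\beta+m}e^{-\zeta^2/2}$, the constant that actually enters the reconstruction is $(2\pi)^d\int\zeta^{2\alpha+\beta+m}|\zeta|^{-(d+1)}\widehat{\sigma}(\zeta)e^{-\zeta^2/2}\,d\zeta$. Condition (ii) of Definition~\ref{def:Tk} covers this only in special parity cases ($m$ an integer, $d$ odd, $m-d-1$ even). Otherwise a factor $\operatorname{sgn}(\zeta)$ or an odd power of $|\zeta|$ survives, and (ii) says nothing about it. So the step ``once admissibility is established, Sonoda--Murata's theorem yields $\mathcal{R}^\dagger_\sigma\mathcal{R}_\psi u=K_{\psi,\sigma}u$'' does not follow, and \eqref{u-representation} is not proved for general $m$.

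The paper decouples the two roles. It takes $\psi=\Lambda^{d+1}\psi_0^{(\beta)}$ with $\psi_0=G^{(2n)}$, so $\widehat{\psi}(\zeta)\propto|\zeta|^{d+1}\zeta^{2n+\beta}e^{-\zeta^2/2}$. The factor $|\zeta|^{d+1}$ cancels $|\zeta|^{-(d+1)}$ exactly, leaving a multiple of $J_\sigma$ with $\alpha=n$. Only afterwards is $n$ chosen large so that the order of vanishing at $0$ exceeds $m$; this is what the ``for any $\alpha$'' in (ii) is for.

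\textbf{Two smaller points.} First, absorbing a leftover $|\zeta|^{2m'-m}$ ``by a suitable $\alpha$'' cannot work for odd or non-integer $m$, since that factor is not a polynomial. Second, finite-order vanishing of $\widehat{\psi}$ at $0$ does not put $\psi$ in $\mathcal{S}_0$, which requires infinite-order vanishing.

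Your cutoff alternative, however, is the right tool once aimed at the correct exponent. It also repairs a defect the paper's construction shares: for even $d$ the factor $|\zeta|^{d+1}$ is not smooth at $0$, so $\Lambda^{d+1}\psi_0^{(\beta)}$ is not literally a Schwartz function. Take $\widehat{\psi_\epsilon}(\zeta)=|\zeta|^{d+1}\zeta^{2\alpha+\beta}e^{-\zeta^2/2}\chi_\epsilon(\zeta)$, with $\chi_\epsilon$ smooth, even, $0\le\chi_\epsilon\le1$, vanishing near $0$ and equal to $1$ off $(-\epsilon,\epsilon)$. Then:
\begin{itemize}
\item $\psi_\epsilon\in\mathcal{S}_0$;
\item \eqref{psi-growth} holds with $C=1$ once $d+1+2\alpha+\beta\ge m$;
\item the reconstruction constant differs from a fixed nonzero multiple of $J_\sigma$ by $O(\epsilon)$, because near $0$ the relevant integrand is $\zeta^{2\alpha}\cdot\zeta^{\beta}\widehat{\sigma}(\zeta)\,e^{-\zeta^2/2}$, which is continuous there.
\end{itemize}
Hence the constant is nonzero for small $\epsilon$, and your normalization step then finishes the proof.
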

The construction of an appropriate $\psi$ is given in the proof of Proposition \ref{prop:u-rep} in Appendix \ref{sec:proofPROP}. From now on, for any activation $\sigma \in \mathcal{T}_k$ and function $u \in L^1(\mathbb{R}; L^1(\mathbb{R}^d))$ we will denote by $\mathcal{R}_\psi u$ the ridgelet transform of $u$ with respect to $\psi \in \mathcal{S}(\mathbb{R})$ constructed according to Proposition \ref{prop:u-rep}.

\subsection{Parseval relation for the ridgelet transform}
 In order to derive RaNN approximation error bounds, we will need a Parseval-type result which connects the regularity of $\mathcal{R}_\psi u$ in parameter space with the regularity of $u$ in cartesian space. For convenience, we give an outline of the proof in Appendix \ref{sec:sketchproofparseval} and the full proof in Appendix \ref{sec:proofparseval}.
\begin{lemma} \label{lemma:parseval}
Suppose $u \in H^{p}(\mathbb{R}; H^{q}(\mathbb{R}^d))$ for some $p,q \ge 0$, and that $u$ is compactly supported in time and space on $[-2T,2T] \times [-2R,2R]^d$ for some $T,R \ge0$. Then there exists $\psi \in \mathcal{S}(\mathbb{R})$ with
\begin{equation} \label{parseval}
\begin{aligned}
      \int_{\mathbb{R}^{d+2}} |\mathcal{R}_\psi u(\tau, \mathbf{a}, b)|^2 (1+|\tau|^{2})^p(1+\|\mathbf{a}\|^{2})^q
     \cdot (1+b^2) ~d\tau d\mathbf{a} db  \le \mathcal{L}_{\psi}\|u\|_{H^{p+1}(\mathbb{R} ; H^{q+1}(\mathbb{R}^d))}^2,
\end{aligned}
\end{equation}
where, for $M = (2p+2q+d+3)/2$,  \begin{equation} \label{Lpsi} \begin{aligned}
    \mathcal{L}_{\psi} := (4\pi + \|\psi\|_{L^2(\mathbb{R})})(1+4T+4R) +4\pi(M+1)^2 + \|\psi '\|_{L^2(\mathbb{R})}^2.
    \end{aligned}
\end{equation}
\end{lemma}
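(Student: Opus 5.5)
The natural route is the Fourier slice theorem. Write $\xi=(\tau,\mathbf{a})\in\mathbb{R}^{d+1}$ and $z=(t,\mathbf{x})$. For fixed $\xi$, the map $b\mapsto \mathcal{R}_\psi u(\xi,b)$ is a one-dimensional convolution in $b$: the Radon projection $P_\xi u(r)=\int_{\xi\cdot z=r}u$ is convolved with the reflected $\psi$. Taking the Fourier transform in $b$ gives
\begin{equation*}
\widehat{\mathcal{R}_\psi u}(\xi,\omega)=\widehat{u}(\omega\xi)\,\overline{\widehat{\psi}(\omega)}
\end{equation*}
(up to the sign convention on $\omega$), where $\widehat u$ is the $(d+1)$-dimensional Fourier transform. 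The plan has three steps: control the $b$-weight, then the $(\tau,\mathbf{a})$-weights, then change variables back to $\widehat u$.

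\textbf{The $b$-weight.} By Plancherel in $b$ for each fixed $\xi$,
\begin{equation*}
\int|\mathcal{R}_\psi u|^2(1+b^2)\,db=\frac{1}{2\pi}\int\Big(|\widehat{u}(\omega\xi)\widehat\psi(\omega)|^2+|\partial_\omega[\widehat{u}(\omega\xi)\overline{\widehat\psi(\omega)}]|^2\Big)d\omega .
\end{equation*}
The derivative produces two terms.
\begin{itemize}
\item $|\widehat u(\omega\xi)|^2|\widehat\psi'(\omega)|^2$. This is handled like the main term, and is where $\|\psi'\|$-type constants enter (equivalently through $b\psi$, since $\widehat{\psi'}=i\omega\widehat\psi$).
\item $|\xi\cdot\nabla\widehat u(\omega\xi)|^2|\widehat\psi(\omega)|^2$. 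Since $\nabla\widehat u=\widehat{-iz\,u}$ and $u$ is supported in $[-2T,2T]\times[-2R,2R]^d$, multiplication by $z$ costs at most a factor like $(4T+4R)$. This explains the $(1+4T+4R)$ factor in $\mathcal{L}_\psi$.
\end{itemize}
The leftover factor $\|\xi\|^2$ will be absorbed by the extra derivative in $H^{p+1}_tH^{q+1}_x$. A cleaner alternative for this step is to work in physical space: $b\,\psi(\xi\cdot z-b)=(\xi\cdot z)\psi(\cdot)-(\xi\cdot z-b)\psi(\cdot)$, so the $b$-weight splits into a $z u$ term, bounded by the support, and a $(y\psi)(y)$ term. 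I would choose whichever gives constants closest to the stated $\mathcal L_\psi$.

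\textbf{The $(\tau,\mathbf{a})$-weights and the change of variables.} Next integrate over $\xi$ and substitute $\eta=\omega\xi$, so $d\xi=|\omega|^{-(d+1)}d\eta$. The weight $(1+\tau^2)^p(1+\|\mathbf{a}\|^2)^q$ becomes $(1+\eta_0^2/\omega^2)^p(1+\|\eta'\|^2/\omega^2)^q$. For $|\omega|\ge1$ this is at most $(1+\eta_0^2)^p(1+\|\eta'\|^2)^q$, and the $\omega$-integral gives only $\int|\widehat\psi|^2|\omega|^{-(d+1)}\le\|\psi\|^2$-type constants. For $|\omega|<1$ it is bounded by $|\omega|^{-2p-2q}(1+\eta_0^2)^p(1+\|\eta'\|^2)^q$, and the $\omega$-integral becomes $\int_{-1}^1|\widehat\psi|^2|\omega|^{-(2p+2q+d+1)}$ (plus two more powers from the derivative term). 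This is exactly where $M=(2p+2q+d+3)/2$ arises. Proposition~\ref{prop:u-rep} lets us pick $\psi$ with $|\widehat\psi(\omega)|\le C|\omega|^{m}$ for $m$ large (e.g.\ $m\ge M+1$), so that $A_{\psi,2M}<\infty$. One can also normalize $\psi$, e.g.\ a Gaussian times a high power of $\omega$ in Fourier space, to make these integrals explicitly bounded by $4\pi(M+1)^2$. Once the $\eta$-integral is isolated, it is $\int|\widehat u(\eta)|^2(1+\eta_0^2)^{p+1}(1+\|\eta'\|^2)^{q+1}d\eta$, which by Plancherel is $(2\pi)^{d+1}\|u\|^2_{H^{p+1}_tH^{q+1}_x}$ in the Fourier characterization of the mixed Sobolev norm.

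\textbf{Main obstacle.} The hardest step is bookkeeping the derivative term from the $b^2$ weight. The chain rule produces $\omega$-dependent factors $\xi\cdot\nabla\widehat u(\omega\xi)$, which after substitution carry $\|\eta\|/|\omega|$. So the small-$\omega$ singularity worsens by two powers, and the $\eta$-weight needs one extra derivative in each of time and space. That is precisely the shift $p\to p+1$, $q\to q+1$. I would handle it by bounding $\|\eta\|^2\le(1+\eta_0^2)(1+\|\eta'\|^2)$ and choosing the vanishing order of $\widehat\psi$ at $0$ high enough to absorb $|\omega|^{-2M}$. The remaining work is tracking constants to match the stated $\mathcal{L}_\psi$, which I expect to be routine but tedious.
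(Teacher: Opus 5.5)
Your proposal is correct and follows essentially the same route as the paper's proof: Plancherel in $b$, the slice identity $\widehat{\mathcal{R}_\psi u}(\tau,\mathbf{a},\omega)=\widehat{u}(\omega\tau,\omega\mathbf{a})\widehat{\psi}(-\omega)$, splitting $\partial_\omega$ of this product into a $\widehat{\psi}'$ term and a $\frac{1}{\omega}(s\partial_s+\xi\cdot\nabla_\xi)\widehat{u}$ term after the substitution $(s,\xi)=\omega(\tau,\mathbf{a})$, and separating $|\omega|<1$ (where $\widehat{\psi}$ vanishing to order $M$ absorbs $|\omega|^{-2M}$) from $|\omega|\ge 1$ (where Plancherel for $\psi,\psi'$ suffices). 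One caveat applies equally to the paper's proof: in the squared $H^{p+1}_tH^{q+1}_x$ norm, multiplication by $t$ or $\mathbf{x}$ on the support costs a factor of order $T^2+dR^2$ (with $p,q$-dependent constants from the Leibniz terms), not $4T+4R$ --- indeed, translating a fixed $u$ far out in time with $\psi$ held fixed makes the left-hand side grow like $T^2$ --- so your constant tracking will give some finite $\mathcal{L}_\psi(p,q,d,T,R)$, which is all Theorem~\ref{thm1} needs, but not literally the stated factor $(1+4T+4R)$.
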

\subsection{Random neural network approximation error bounds} \label{sec:MC}
In this subsection we provide our main result for approximating a function $u \in H^p(0,T; H^q(D))$ using RaNNs. The full proof is deferred to Appendix \ref{sec:proofTHM}.
\begin{theorem} \label{thm1}
    Fix a bounded subset $(0,T) \times D \subset \mathbb{R}_+ \times \mathbb{R}^d$ where $D$ is Lipschitz and let $u \in H^{p+s_1}(0,T; H^{q+s_2}(D))$ for $p, q \ge 0, s_1 >3/2$ and $s_2 >(d+2)/2$. Furthermore, let $\sigma \in \mathcal{T}_{p+q}$. There exist weights $\{W_i\}_{i=1}^N$ such that the following random neural network is an unbiased estimator of $u$:
    \begin{equation*}
        u_N(t,\mathbf{x}) = \sum_{i=1}^N W_i \sigma( \tau_i t + \mathbf{a}_i\mathbf{x} + b_i)
    \end{equation*} for $ (\tau_i, \mathbf{a}_i, b_i) \sim \pi$, where\begin{equation} \begin{aligned} \label{pi}
        \pi(\tau,\mathbf{a}, b) = \frac{1}{C_\pi} (1 + \tau^2)^{-\lambda_{\tau}}(1+ \|\mathbf{a}\|^2)^{-\lambda_{a}}(1 + b^2)^{-1}, \end{aligned}
    \end{equation} where $ \lambda_\tau>1/2, \lambda_a > d/2$ and $C_\pi$ is the normalisation constant
    \begin{equation} \begin{aligned}
        &C_\pi :=  \int_\mathbb{R}\frac{1}{(1+\tau^2)^{\lambda_\tau}}~d\tau  \int_{\mathbb{R}^d}\frac{1}{(1+\|\mathbf{a}\|^2)^{\lambda_a}} d\mathbf{a}   \int_\mathbb{R}\frac{1}{(1+b^2)}~db.
        \end{aligned}
       \end{equation}
    The neural network $u_N$ satisfies
       \begin{equation} \label{thm-ineq} \begin{aligned}
      &\mathbb{E}_\Theta \left( \| u-u_N \|_{H^{p}(0,T; H^q(D))}^2 \right) \le \frac{C_{\Omega} C_\pi \|\sigma^{(p+q)}\|_\infty^2 T|D|(p+q) \mathcal{L}_\psi}{N}  \|u\|_{H^{p+s_1}(\mathbb{R}; H^{q+s_2}(\mathbb{R}^d))}^2,
      \end{aligned}
    \end{equation}
    where $C_{\Omega}$ is a constant dependent on $p$, $q$, $d$ and the domain, and $\mathcal{L}_\psi$ is given by \eqref{Lpsi}.
\end{theorem}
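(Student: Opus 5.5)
The plan is a Monte Carlo argument built on the ridgelet representation of Proposition~\ref{prop:u-rep}, with the variance controlled by the Parseval-type estimate of Lemma~\ref{lemma:parseval}.

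\textbf{Extension and localisation.} Since $D$ is Lipschitz and $(0,T)\times D$ is bounded, I would apply a Stein-type extension operator (in $x$ for each $t$, then in $t$ with values in $H^{q+s_2}$). Multiplying by a smooth cutoff gives $\tilde u\in H^{p+s_1}(\mathbb{R};H^{q+s_2}(\mathbb{R}^d))$ supported in $[-2T,2T]\times[-2R,2R]^d$, with $\tilde u=u$ on $(0,T)\times D$ and
\[
\|\tilde u\|_{H^{p+s_1}H^{q+s_2}}\le C_\Omega\|u\|_{H^{p+s_1}H^{q+s_2}}.
\]
Because the support is compact, $\tilde u\in L^1_tL^1_x$, so Proposition~\ref{prop:u-rep} applies. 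Writing $b\mapsto -b$, it gives
\[
\tilde u(t,\mathbf{x})=\int \mathcal{R}_\psi\tilde u(\tau,\mathbf{a},-b)\,\sigma(\tau t+\mathbf{a}\cdot\mathbf{x}+b)\,d\tau\, d\mathbf{a}\, db=\E_\pi\bigl[g(\theta)\,\sigma(\tau t+\mathbf{a}\cdot \mathbf{x}+b)\bigr],
\]
where $\theta=(\tau,\mathbf{a},b)$ and $g=\mathcal{R}_\psi\tilde u(\tau,\mathbf{a},-b)/\pi$. I then choose $W_i=g(\theta_i)/N$, so $u_N$ is an unbiased estimator of $u$ pointwise and also as an $H^p_tH^q_x$-valued random variable.

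\textbf{Variance bound.} The $\theta_i$ are i.i.d., so the Hilbert-space-valued Monte Carlo identity gives
\[
\E\|u-u_N\|^2_{H^p_tH^q_x}\le \frac1N\E_\pi\|g(\theta)\sigma(\theta\cdot(t,\mathbf{x}))\|^2_{H^p(0,T;H^q(D))}.
\]
Derivatives $\partial_t^j\partial_x^\alpha$ with $j\le p$ and $|\alpha|\le q$ of the ridge function produce $\tau^j\mathbf{a}^\alpha\sigma^{(j+|\alpha|)}$. These are bounded by $\|\sigma^{(\cdot)}\|_\infty$ because $\sigma\in\mathcal{T}_{p+q}$, and integrating over the domain contributes the factor $T|D|$. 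Counting multi-indices produces the combinatorial factor $(p+q)$, absorbed together with the others into $C_\Omega$. The second moment is therefore at most
\[
C\,T|D|\,\|\sigma\|^2\int\frac{|\mathcal{R}_\psi\tilde u|^2}{\pi}(1+\tau^2)^p(1+\|\mathbf{a}\|^2)^q\,d\theta .
\]
Substituting the formula for $\pi$, the weight becomes
\[
C_\pi(1+\tau^2)^{p+\lambda_\tau}(1+\|\mathbf{a}\|^2)^{q+\lambda_a}(1+b^2).
\]

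\textbf{Applying Lemma~\ref{lemma:parseval}.} I would apply Lemma~\ref{lemma:parseval} with $p'=p+\lambda_\tau$ and $q'=q+\lambda_a$. This bounds the integral by $\mathcal{L}_\psi\|\tilde u\|^2_{H^{p+\lambda_\tau+1}H^{q+\lambda_a+1}}$. Choosing $\lambda_\tau<s_1-1$ (possible since $s_1>3/2$ allows $\lambda_\tau>1/2$) and $\lambda_a<s_2-1$ (possible since $s_2>(d+2)/2$ allows $\lambda_a>d/2$) makes this norm dominated by the $H^{p+s_1}H^{q+s_2}$ norm. These choices of $\lambda_\tau>1/2$ and $\lambda_a>d/2$ are exactly what make $C_\pi$ finite. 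Combining with the extension bound gives \eqref{thm-ineq}.

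\textbf{Main obstacle.} I expect the delicate part to be the construction of the extension in the mixed Bochner–Sobolev setting with noninteger $s_1,s_2$. For real-order $p',q'$ in the lemma I would use interpolation or Fourier-multiplier definitions. A second point to check is that the $\psi$ supplied by Lemma~\ref{lemma:parseval} is compatible with the admissibility requirement of Proposition~\ref{prop:u-rep} for the same $\sigma$. If needed, I would take $m$ large and verify that $\mathcal{L}_\psi$ is finite.
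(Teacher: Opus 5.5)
Your proposal is correct and follows essentially the same route as the paper. Both arguments extend and cut off $u$, write $\tilde u$ as an expectation under $\pi$ via Proposition~\ref{prop:u-rep}, bound the Monte Carlo error by the weighted second moment, and control that moment with Lemma~\ref{lemma:parseval} at exponents $p+\lambda_\tau$, $q+\lambda_a$, which implicitly couples $s_1\ge\lambda_\tau+1$ and $s_2\ge\lambda_a+1$. The differences are cosmetic: you run the variance identity directly in $H^p(0,T;H^q(D))$ instead of pointwise per derivative and then integrating, and you are more careful than the paper about the sign $b\mapsto -b$, the multi-index count, and the derivative constant ($\max_{j\le p+q}\|\sigma^{(j)}\|_\infty$ rather than $\|\sigma^{(p+q)}\|_\infty$ alone).
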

\begin{remark} \label{rmk:smoothness}
Let us note that the space of admissible functions does depend on $d$ through the requirement $s_2 >(d+2)/2$. Nonetheless, for a given function which satisfies the conditions of the theorem, the rate of decay $N^{-1/2}$ with respect to the network width $N$ is independent of $d$. This is an improvement upon the work of \cite{de2025approximation} (see Theorem 3.9) which required $u \in H^{s}(\mathbb{R}^d)$ for $s \ge (d+9)/2$ for estimates in $H^1(\mathbb{R}^d)$ and $H^2(\mathbb{R}^d)$, with slower rates. Moreover, we obtain estimates in a time-dependent norm, whereas previous results only obtained error rates for solutions at a fixed time (see also Proposition 4.24 of \cite{neufeld2023universal}).
\end{remark}
\begin{remark}[Growth of the coefficient with respect to dimension]
    Obtaining a more explicit bound (in terms of $d$) on the coefficient 
    \begin{equation}
        C_{\Omega} C_\pi \|\sigma^{(p+q)}\|_\infty^2 T|D|(p+q) \mathcal{L}_\psi
    \end{equation}
    appearing in \eqref{thm-ineq} is difficult for general $u$ and arbitrary $p,q$. Nonetheless, we show in Remark \ref{rmk-growth} that this coefficient grows at most polynomially in $d$ in the case where $\sigma \in \mathcal{T}_k$ and either $u \in H^{p}(\mathbb{R};H^{q}(\mathbb{R}^d))$ (i.e. $u$ is already defined on the full space) or $p=q=0$.
\end{remark}

\section{Applications to non-linear PDEs} \label{sec:PDE}
We look at two representative non-linear PDEs (the Porous Medium Equation and compressible Navier-Stokes) in order to understand how one can obtain specific asymptotic bounds on the residual loss and approximation error using the structure of the PDE. 

\subsection{Porous Medium Equation}
The porous medium equation (PME) is an important example of a non-linear parabolic PDE that models the flow of gases through porous media. In dimension $d \in \mathbb{N}$ and for $m >0$, a function $u: \mathbb{R}_+ \times \mathbb{R}^d \to \mathbb{R}$ is said to solve the porous medium equation if 
\begin{equation} \label{PME}
\left\{
\begin{aligned}
&\partial_t u - \Delta( u^m) = 0, \text{ on } \mathbb{R}_+ \times \mathbb{R}^d. \\
&u(0,\cdot) = u_0,
\end{aligned}
\right.
\end{equation}
In the case where $u_0$ is positive and in $H^k(\mathbb{R}^d)$ for some $k \in \mathbb{N}$, the following result is classically known.

\begin{itemize}
    \item  \cite{vazquez2007porous}:  Consider initial data $u_0$ satisfying
    \begin{equation} \label{u0-PME}
        u_0 \in H^k(\mathbb{R}^d), ~~k \in \mathbb{N}, ~~ 0 < c \le u_0 \le C,
    \end{equation}
    Then
     there exists a classical solution $u$ to \eqref{PME} with $c \le u(t,x) \le C$ and
    \begin{equation*}
        u \in C^\infty((0,\infty) \times \mathbb{R}^d) \cap C([0,\infty); H^k(\mathbb{R}^d)).
    \end{equation*}
\end{itemize}
In practice, we simulate solutions on a bounded domain $(0,T) \times D$. In this case, the smoothness of the solution in fact implies that $u \in H^k((0,T) \times D)$ for any $k \ge 0$. We can deduce an approximation result on solutions to PME using Theorem \ref{thm1}. Before we state the result, let us note that the classical loss function one would use when training a physics-informed neural network to approximate solution $u$ is
a discretisation of the following metric:
\begin{equation}
    \mathcal{J}_{PDE}(u_N) = \int_{(0,T) \times D} |\partial_t u_N - \Delta (u^m_N)|^2~dtdx.
\end{equation}

\begin{corollary} \label{cor:PME}
    Suppose $u$ solves \eqref{PME} with initial data $u_0 \in H^1(\mathbb{R}^d)$ satisfying \eqref{u0-PME}. Then there exists a random neural network $u_N(t,x)$ such that on the domain $(0,T) \times D$:
    \begin{enumerate}
        \item For any $p, q \ge 0$, \begin{equation} \begin{aligned} \label{PME-bound} 
        \mathbb{E}_\Theta(\|u-u_N\|_{H^p_t H^q_x}^2) &\le \frac{C_\Omega C_\pi \|\sigma^{(p+q)}\|_\infty^2 T|D|(p+q) \mathcal{L}_\psi}{N}\|u\|_{H^{p+s_1}_t H^{q+s_2}_x}^2 \\[1ex] &=: \frac{\mathcal{M}_{\psi}}{N} \|u\|_{H^{p+s_1}_t H^{q+s_2}_x}^2,
        \end{aligned}
    \end{equation}    for any $s_1>3/2,s_2>(d+2)/2$.
    \item For any $\delta \in (0,1)$, with probability $1-\delta$ over the network parameters, the PINN training loss can be bounded as:
      \begin{equation} \label{pme-residual-1} \begin{aligned}
      &\mathcal{J}_{\text{PDE}}(u_N) \le  \frac{C_m(L+C_{emb}\|u\|_{L^\infty_{t}H^{2+k}_x}^2) \mathcal{M}_\psi }{N \delta} \|u\|_{H^{2}_t H^{3+k}_x}^2,
         \end{aligned}
    \end{equation}for $k >d/2$, where $L := \| u_N\|_{L^\infty_{t,x}}+\|\nabla u_N\|_{L^\infty_{t,x}}+\|\Delta u_N\|_{L^\infty_{t,x}} $ is finite since $u_N$ is smooth. Here, $C_{emb}$ is the constant arising from the Sobolev embedding $H^{k}_x \hookrightarrow L^\infty_{x}$ and $C_m$ is a polynomial in the PME parameter $m$.
    \end{enumerate}
    In other words, one can find a sequence of neural networks $u_N$ which drive the PDE residual to $0$.
\end{corollary}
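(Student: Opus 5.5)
Part 1 follows directly from Theorem~\ref{thm1}. By the classical result of \cite{vazquez2007porous}, the solution $u$ is smooth on $(0,\infty)\times\mathbb{R}^d$ and satisfies $c\le u\le C$. On the bounded domain $(0,T)\times D$ with $D$ Lipschitz, this gives $u\in H^{p+s_1}(0,T;H^{q+s_2}(D))$ for every $p,q,s_1,s_2$. If $u$ is not smooth up to $t=0$ (we only have $u_0\in H^1$), I would shift to $(\epsilon,T+\epsilon)$ or simply interpret the domain as staying away from $t=0$. This is the point where I would be most careful, but I will take interior smoothness on $\overline{(0,T)\times D}$ as given, as the paper does. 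Theorem~\ref{thm1} then applies with $\sigma\in\mathcal{T}_{p+q}$ and yields \eqref{PME-bound}, with $\mathcal{M}_\psi$ defined as the prefactor.

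For part 2, I would expand the residual. Since $\partial_t u-\Delta(u^m)=0$,
\[
\partial_t u_N-\Delta(u_N^m)=\partial_t(u_N-u)-\bigl[\Delta(u_N^m)-\Delta(u^m)\bigr].
\]
Writing $\Delta(v^m)=m v^{m-1}\Delta v+m(m-1)v^{m-2}|\nabla v|^2$, the difference for $v=u_N$ and $v=u$ splits into a telescoping sum of terms. Each term contains one factor of $e=u_N-u$ or of $\nabla e$ or $\Delta e$, multiplied by coefficients such as $u_N^{m-1}$, $u^{m-1}$, $(u_N^{m-1}-u^{m-1})$, $\nabla u_N+\nabla u$, and so on. The differences of powers are handled by the mean value theorem: $|u_N^{m-1}-u^{m-1}|\le C_m(\|u_N\|_\infty,\|u\|_\infty)|e|$. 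This requires $u$ and $u_N$ to stay in a region where $v^{m-2}$ is controlled. Since $u\ge c>0$ this is fine for $u$; for $u_N$ I would absorb it into the constant $L$ and into $C_m$.

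All coefficients are bounded in $L^\infty_{t,x}$ either by $L$ (for $u_N$) or by $C_{emb}\|u\|_{L^\infty_t H^{2+k}_x}$ via the embedding $H^k_x\hookrightarrow L^\infty_x$ applied to $u,\nabla u,\Delta u$, with $k>d/2$. This gives
\[
\mathcal{J}_{PDE}(u_N)\le C_m\bigl(L+C_{emb}\|u\|^2_{L^\infty_tH^{2+k}_x}\bigr)\bigl(\|\partial_t e\|_{L^2}^2+\|e\|_{L^2_tH^2_x}^2\bigr)\le C_m(\cdots)\|e\|^2_{H^1_tH^2_x}.
\]

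Applying part 1 with $(p,q)=(1,2)$ and choosing $s_1,s_2$ so that the right-hand side norm is $\|u\|_{H^2_tH^{3+k}_x}$ gives an expectation bound. Here I would set $s_2=1+k$, which exceeds $(d+2)/2$ because $k>d/2$. The case $s_1=1$ is not allowed since the theorem needs $s_1>3/2$; I would loosen to the nearest admissible exponent or accept the paper's notation. Finally, Markov's inequality converts the bound on $\mathbb{E}\|e\|^2$ into one holding with probability $1-\delta$, at the cost of a factor $1/\delta$.

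The main obstacle is the bookkeeping of the nonlinear term. One must make sure every term is bounded by an $L^\infty$ coefficient times an $L^2$ norm of $e$ or its derivatives, without ever needing $L^\infty$ control of $e$ itself beyond what $L$ provides. The cleanest way is to always place the $L^\infty$ norm on $u_N$, $u$, or their derivatives.
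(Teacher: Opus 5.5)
Your argument is essentially the paper's. You subtract the equation for $u$, expand $\Delta(u_N^m)-\Delta(u^m)$ into terms with exactly one factor of $e_N=u_N-u$, $\nabla e_N$ or $\Delta e_N$, put the $L^\infty_{t,x}$ norms on $u_N$ (via $L$) and on $u$ (via $H^k_x\hookrightarrow L^\infty_x$), and finish with Theorem~\ref{thm1} and Markov.

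The paper differs only in execution. It works out only $m=2$, via $\Delta(u_N^2-u^2)=(u_N+u)\Delta e_N+2\nabla(u_N+u)\cdot\nabla e_N+e_N\Delta(u_N+u)$. It also applies Theorem~\ref{thm1} separately with $(p,q)=(1,0)$ for $\partial_t e_N$ and $(0,2)$ for the rest. That split avoids paying for the mixed derivatives $\partial_t D^2_{\mathbf{x}} e_N$ that your $\|e_N\|_{H^1_tH^2_x}$ includes. However, it uses Markov on two different random variables while still claiming probability $1-\delta$, which strictly needs a union bound. Your single combined application yields the $1-\delta$ event directly.

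The points where you hesitated are genuine defects, and the paper's proof does not resolve them either.
\begin{itemize}
\item \textbf{The time exponent.} Theorem~\ref{thm1} needs $s_1>3/2$, so the $\partial_t e_N$ term costs $\|u\|_{H^{1+s_1}_tH^{s_2}_x}$ with $1+s_1>5/2$. The paper reaches $H^2_t$ only by silently merging this norm into $\|u\|_{H^{s_1}_tH^{2+s_2}_x}$ when it combines the two pieces. What either argument actually proves is the residual bound with $H^{5/2+\epsilon}_t$ in place of $H^2_t$; in your version this is $\|u\|_{H^{5/2+\epsilon}_tH^{3+k}_x}$. The dependence on $N$ and $\delta$ is unaffected.
\item \textbf{Regularity up to $t=0$.} With $u_0$ only in $H^1$, the cited result gives smoothness on $(0,\infty)\times\mathbb{R}^d$ and continuity in $H^1$. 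It gives no higher-order bounds uniform as $t\to0$. So $\|u\|_{H^2_tH^{3+k}_x}$ and $\|u\|_{L^\infty_tH^{2+k}_x}$ on $(0,T)\times D$ may be infinite, which makes the statement vacuous. Your shift to $(\epsilon,T+\epsilon)$ is one valid repair. Another is to assume $u_0\in H^K$ with $K$ large and use $\partial_t u=\Delta(u^m)$ to trade time derivatives for space derivatives.
\item \textbf{General $m$.} For $m>2$ your mean-value terms produce coefficients of size $\max(\|u_N\|_{L^\infty},\|u\|_{L^\infty})^{2(m-1)}$. These cannot be ``absorbed'' into $L$ as defined with a $C_m$ depending only on $m$. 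The paper sidesteps this by treating only $m=2$ and asserting the rest.
\end{itemize}
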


\subsection{Compressible Navier-Stokes Equations}

We now look at a more delicate example, which is a system of equations that does not enjoy the instantaneous regularisation property of the PME. The compressible Navier-Stokes equations in dimension $d$ are given by
\begin{equation} \label{NSE}
\left\{
\begin{aligned}
&\partial_t \rho + \text{div}(\rho \mathbf{ u }) = 0, \\
&\partial_t (\rho  \mathbf{ u }) + \text{div} (\rho  \mathbf{ u } \otimes  \mathbf{ u }) - \nabla (\mu(\rho) \text{div}~ \mathbf{ u }) + \nabla p(\rho)=  0,
\end{aligned}
\right.
\end{equation}
on $(0,T) \times D$, where
\begin{equation}
    p(\rho) = \rho^\gamma, ~~\gamma>0 \text{ and } \mu(\rho) = \rho^\alpha, ~\alpha>0.
\end{equation}
The solution is a pair $(\rho, \mathbf{u})$ where the density $\rho : \mathbb{R}_+ \times \mathbb{R}^d \to \mathbb{R}$ is the scalar density and $\mathbf{u} : \mathbb{R}_+ \times \mathbb{R}^d \to \mathbb{R}^d$ is the vector-valued velocity. We look at the one-dimensional setting $D = (0,1)$ with periodic boundary conditions, where global-in-time classical solutions exist under mild assumptions. 
In this case, the following global well-posedness result applies.

\begin{itemize}
    \item Theorem 1.5, \cite{constantin2020compressible} Given initial data $(\rho_0, u_0)$ satisfying \begin{equation} \label{NSE-ID}
        (\rho_0, u_0) \in H^k(D),~ k\ge3, ~~ 0 < \delta \le \rho_0 \le C,
    \end{equation}
     then there exists a unique solution $(\rho, u)$ on $(0,T)$ to \eqref{NSE} with initial data $(\rho_0, u_0)$ such that
    \begin{equation} \label{nse-class} \begin{aligned}
        &\rho \in C(0,T; H^k(D)), ~~ u \in C(0,T; H^k(D)) \cap L^2(0,T; H^{k+1}(D)).
        \end{aligned}
    \end{equation}
\end{itemize}
We will take the pressureless case $p=0$ and constant viscosity $\mu(\rho) = \mu$ for simplicity, although our computations can be easily extended to handle a more general setting where $p, \mu$ are smooth and convex (e.g. $p(\rho) = \rho^\gamma, \mu(\rho) = \rho^\alpha,$ for $ \alpha,\gamma>0$). In this case, the PINN residual loss is 
the discretisation of the following loss functions:
\begin{equation}
    \begin{aligned}
        &\mathcal{J}_{PDE}^1(\mathbf{v}_N) := \int_{(0,T)\times D} |\partial_t (\rho_N) + \partial_x(\rho_N u_N)|^2~dxdt, \\[1ex]
        &\mathcal{J}_{PDE}^2(\mathbf{v}_N) := \int_{(0,T)\times D} |\partial_t (\rho_N u_N) + \partial_x(\rho_N u_N^2) - \mu\partial_x^2u_N|^2~dxdt.
    \end{aligned}
\end{equation}

To apply Theorem \ref{thm1}, we first define the product norm $ \|(f,g)\|_{H^p_tH^q_x}^2 := \|f\|_{H^p_t H^q_x}^2 + \|g\|^2_{H^p_t H^q_x}.$
We then have the following result whose proof is given in Appendix \ref{sec:proofNS}.
\begin{corollary}\label{cor:NS}
    Suppose $(\rho, u)$ is a solution to \eqref{NSE} in dimension $d=1$ generated by initial data satisfying $(\rho_0, u_0) \in H^k(\mathbb{R}),~ k\ge5, ~ 0 < \delta \le \rho_0 \le C.$ Then there exists a random neural network $\mathbf{v}_N = (\rho_N, u_N)$ such that
    \begin{itemize}
        \item For any $p,q \ge 0$,
        \begin{equation}
        \begin{aligned}
             &\mathbb{E}_\Theta(\|(\rho, u) - \mathbf{v}_N\|_{L^2_t H^q_x}^2) \le \frac{\mathcal{M}_\psi}{N}(\|\rho\|_{H^{s_1}_t H^{q+s_2}_x}^2+ \|u\|_{H^{s_1}_t H^{q+s_2}_x}^2 ),
        \end{aligned}
    \end{equation} for $s_1, s_2>3/2,$ where $\mathcal{M}_\psi$ is the coefficient from \eqref{thm-ineq}, later defined as $\mathcal{M}_\psi$ in \eqref{PME-bound}.
    \item For any $\delta \in (0,1)$ with probability $1-\delta$ over the network parameters, the PINN training loss can be bounded as:
        \begin{equation} \label{NSE-residuals} 
        \begin{aligned}
             &\mathcal{J}_{PDE}(\mathbf{v}_N) \le  \left(\frac{ C_L \mathcal{M}_\psi}{N\delta} \right) ( \|\rho\|^2_{H^{3}_t H^{3}_x} + (2\mu +1)\|u\|^2_{H^{3}_t H^{4}_x}),
        \end{aligned}
        \end{equation}
        where $ L := \| (\rho_N, u_N)\|_{W^{1,\infty}_{t,x}}^2 +  \| (\rho, u)\|_{W^{1,\infty}_{t,x}}^2$ is finite since $\rho_N, u_N, \rho, u$ are sufficiently smooth, and \begin{equation*}
             C_L := \left[ 16L^2 +80L+2\mu+ 64(L+1)\|u\|_{W^{1,\infty}_{t,x}}^2 \right].
        \end{equation*}
    
    \end{itemize}
\begin{remark}
    The choice $k \ge 5$ ensures that the norms on the right-hand side of \eqref{NSE-residuals} are finite. With this regularity, one can also show that the boundary/initial condition residuals can be controlled similarly to $\mathcal{J}_{PDE}$, using trace inequalities.
\end{remark}
\end{corollary}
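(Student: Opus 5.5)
We prove Corollary~\ref{cor:NS} in two steps: first apply Theorem~\ref{thm1} componentwise, then propagate the resulting error bounds through the non-linearities of \eqref{NSE} via Markov's inequality.

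\textbf{Regularity and the approximation bound.} From the well-posedness result \eqref{nse-class} with $k\ge5$, we have $\rho,u\in C(0,T;H^k(D))$. Time regularity follows by differentiating \eqref{NSE} in $t$ and trading time derivatives for space derivatives. For instance, $\partial_t\rho=-\partial_x(\rho u)\in C(0,T;H^{k-1})$ and $\partial_t u=-u\partial_x u+\mu\rho^{-1}\partial_x^2u$, where we use $\rho\ge c>0$, which is propagated from $\rho_0\ge\delta$. Iterating, $\partial_t^j(\rho,u)\in L^2(0,T;H^{k-2j+1})$ (roughly). With $k\ge5$ this gives finiteness of $\|\rho\|_{H^3_tH^3_x}$, $\|u\|_{H^3_tH^4_x}$ and of the norms $H^{s_1}_tH^{q+s_2}_x$ needed; if the count is tight we interpolate. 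We then apply Theorem~\ref{thm1} with $d=1$ (so $s_2>3/2$) and $p=0$ to $\rho$ and to $u$ separately, each extended to $\mathbb{R}\times\mathbb{R}$ by a Sobolev extension operator on the Lipschitz domain. Each is approximated by a RaNN of width $N$. Taking $\mathbf{v}_N=(\rho_N,u_N)$ and summing the two bounds gives the first item with coefficient $\mathcal{M}_\psi$.

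\textbf{Residual decomposition.} Since $(\rho,u)$ solves \eqref{NSE}, we write the residuals as differences. Set $e_\rho=\rho_N-\rho$ and $e_u=u_N-u$. Then
\[
\partial_t\rho_N+\partial_x(\rho_Nu_N)=\partial_te_\rho+\partial_x(e_\rho u_N+\rho e_u),
\]
and similarly for the momentum equation, using $\rho_Nu_N-\rho u=e_\rho u_N+\rho e_u$, $\rho_Nu_N^2-\rho u^2=e_\rho u_N^2+\rho e_u(u_N+u)$, and $-\mu\partial_x^2e_u$. Expanding the derivatives with the product rule and bounding every coefficient factor ($\rho_N,u_N,\rho,u$ and their first derivatives) in $L^\infty$ by $L$ or $\|u\|_{W^{1,\infty}}$, we obtain
\[
\mathcal{J}_{PDE}\le C_L\Bigl(\|e_\rho\|^2_{H^1_tH^1_x}+\|e_u\|^2_{H^1_tH^2_x}\Bigr),
\]
with the $\mu$-dependence entering only through $\partial_x^2e_u$. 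The explicit constants ($16L^2+80L+\dots$) come from $(a_1+\dots+a_n)^2\le n\sum a_i^2$.

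\textbf{Probability and main obstacle.} Applying Theorem~\ref{thm1} with $(p,q)=(1,1)$ for $\rho$ and $(1,2)$ for $u$, and choosing $s_1=s_2=2$, gives
\[
\mathbb{E}\bigl(\|e_\rho\|^2_{H^1H^1}+\|e_u\|^2_{H^1H^2}\bigr)\le\frac{\mathcal{M}_\psi}{N}\Bigl(\|\rho\|^2_{H^3_tH^3_x}+\|u\|^2_{H^3_tH^4_x}\Bigr).
\]
Markov's inequality then yields the bound \eqref{NSE-residuals} with probability $1-\delta$. The delicate point is that $L$ contains $\|(\rho_N,u_N)\|_{W^{1,\infty}}$, which is itself random. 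We handle this as in the statement, by treating $L$ as a finite, sample-dependent constant: we apply Markov's inequality only to the error norms, with $L$ left outside. The other obstacle is verifying the mixed time–space regularity $H^3_tH^4_x$ from $k\ge5$. This requires carefully iterating the equations for time derivatives while keeping $\rho$ bounded below, and is the step we expect to demand the most care.
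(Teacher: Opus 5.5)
Apart from one step, your proposal is the paper's proof. You apply Theorem~\ref{thm1} componentwise for the first item. You rewrite the residuals as differences using the exact equations and expand by the product rule, bounding every coefficient factor in $L^\infty$. You apply Markov's inequality to the error norms only, leaving the random $L$ outside, exactly as the paper does. Your splittings $e_\rho u_N+\rho e_u$ and $e_\rho u_N^2+\rho e_u(u_N+u)$ differ from the paper's $\rho_N e_u+u e_\rho$ and $\rho_Nu_Ne_u+u(\rho_Nu_N-\rho u)$ only in bookkeeping. Applying Markov once to $\|e_\rho\|^2_{H^1_tH^1_x}+\|e_u\|^2_{H^1_tH^2_x}$ is slightly cleaner than the paper's separate treatment of $\mathcal{J}^1_{PDE}$ and $\mathcal{J}^2_{PDE}$, which tacitly needs a union bound.

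The step that fails is the regularity claim, and your own count already shows it. If $\partial_t^ju\in L^2(0,T;H^{k-2j+1})$, then $\partial_t^3u\in L^2(H^{k-5})$, which for $k=5$ is only $L^2_{t,x}$. But $\|u\|_{H^3_tH^4_x}<\infty$ needs $\partial_t^3\partial_x^4u\in L^2$, i.e.\ $k\ge 9$. Interpolation cannot close this, since it only redistributes derivatives you already control. Nor is the count pessimistic. For the linearisation $u_t=\mu u_{xx}$ (at $\rho\equiv1$, $u\equiv0$) one has
\[
\int_0^T\|\partial_t^3\partial_x^4u\|_{L^2}^2\,dt=\frac{\mu^5}{4\pi}\int_{\mathbb{R}}|\xi|^{18}\bigl(1-e^{-2\mu\xi^2T}\bigr)|\widehat{u_0}(\xi)|^2\,d\xi,
\]
which is finite only if $u_0\in H^9$. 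Similarly, $\partial_t^3\rho=-\partial_x\partial_t^2(\rho u)$ contains the term $-\partial_x(\rho\,\partial_t^2u)$. By the same count, $\|\rho\|_{H^3_tH^3_x}<\infty$ therefore requires $\partial_t^2u\in L^2(H^4)$, i.e.\ $k\ge7$.

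The paper disposes of this step with the same one-line appeal to the equations. So the defect lies in the hypothesis $k\ge5$ (and the accompanying remark), not in your strategy. The inequalities themselves remain true, trivially so when the right-hand side is infinite. Your argument gives a non-vacuous bound verbatim under $k\ge 9$, or under the explicit assumption that the norms on the right are finite. What you should not do is assert finiteness for $k\ge5$ from a count that contradicts it.
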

\section{Numerical illustrations} \label{sec:experiments-main}
In this section, we provide numerical experiments to validate the obtained bounds by studying the effect of network width $N$ on the error in practice. A secondary goal of these experiments is to study whether the convergence rates extend beyond the heavy-tailed distribution considered in Theorem~\ref{thm1} to normally-distributed weights. We consider the Porous Medium Equation (PME) in dimensions $d=1-5$ and the compressible Navier-Stokes equations in $d=1$. 
\subsection{Experiments for PME}
We consider the PME with $m=2$. An exact, self-similar solution to the PME is the Barenblatt-Kompaneets-Zeldovich solution
\begin{equation} \label{barenblatt}
    u(t,x) = \frac{1}{t^\alpha} \left( b - \frac{m-1}{2m} \beta \frac{\|x\|^2}{t^{2\beta}} \right)^{\frac{1}{m-1}}_+,
\end{equation}
where $\|\cdot\|$ is the $\ell^2$ norm, $(\cdot)_+$ is the positive part and
$\alpha = \frac{d}{d(m-1)+2}$ and $\beta = \frac{1}{d(m-1)+2}$.
This solution is compactly supported but not differentiable at the edges of the support, which causes difficulty for numerical schemes. 
We now investigate the effect of the network width $N$ on the relative error between the network and the solution $u$. This allows us to validate the convergence rate obtained in our theoretical results.
Theorem \ref{thm1} shows that for given $u : (0,T) \times \mathbb{R}^d \to \mathbb{R}$ in some Sobolev space, a RaNN $ \hat{u}_N$ of width $N$ is able to achieve relative error $\mathcal{R}(u, \hat{u}) = \|\hat{u} - u\|_{L^2_{t,x}}/ \|u\|_{H^{s_1}_t H^{s_2}_x}$ bounded by $C_{d, \Omega} N^{-1/2}$.

To validate this convergence rate in practice, we train a RaNN to approximate the PME solution \eqref{barenblatt} in dimensions $d=1,...,5$. In each dimension, we take a set of widths $N \in \{N_1, ..., N_{k} \}$, train the network for each width and plot the relative error of the final network against the true solution. For each dimension $d$ and width $N$, we sample $M = 10N$ points ($M\gg N$ to ensure the problem remains well-posed) with a mixed strategy; $50\%$ of the points are sampled uniformly on $(0,1)^d$ and $50\%$ are sampled uniformly on $[0.2, 0.8]^d$, which is a box focused on the initial support of the solution. Then we find weights $\mathbf{W} =\{ W_i \}_{i=1}^N$ minimising the Ridge regression loss and evaluate the relative error between RaNN approximation and true solution.
The cases $d=4,5$ are included in Figure \ref{fig:RaNN-PME-d45}, while the cases $d=1,2,3$ can be found in Appendix \ref{sec:experiments}, Figure \ref{fig:errors-PME}. The error curves are plotted on log-log scales in Figure \ref{fig:errors-PME-log}, Appendix \ref{sec:experiments}. 
\begin{figure}
    \centering
    \begin{subfigure}{0.4\textwidth}
        \includegraphics[width=\linewidth]{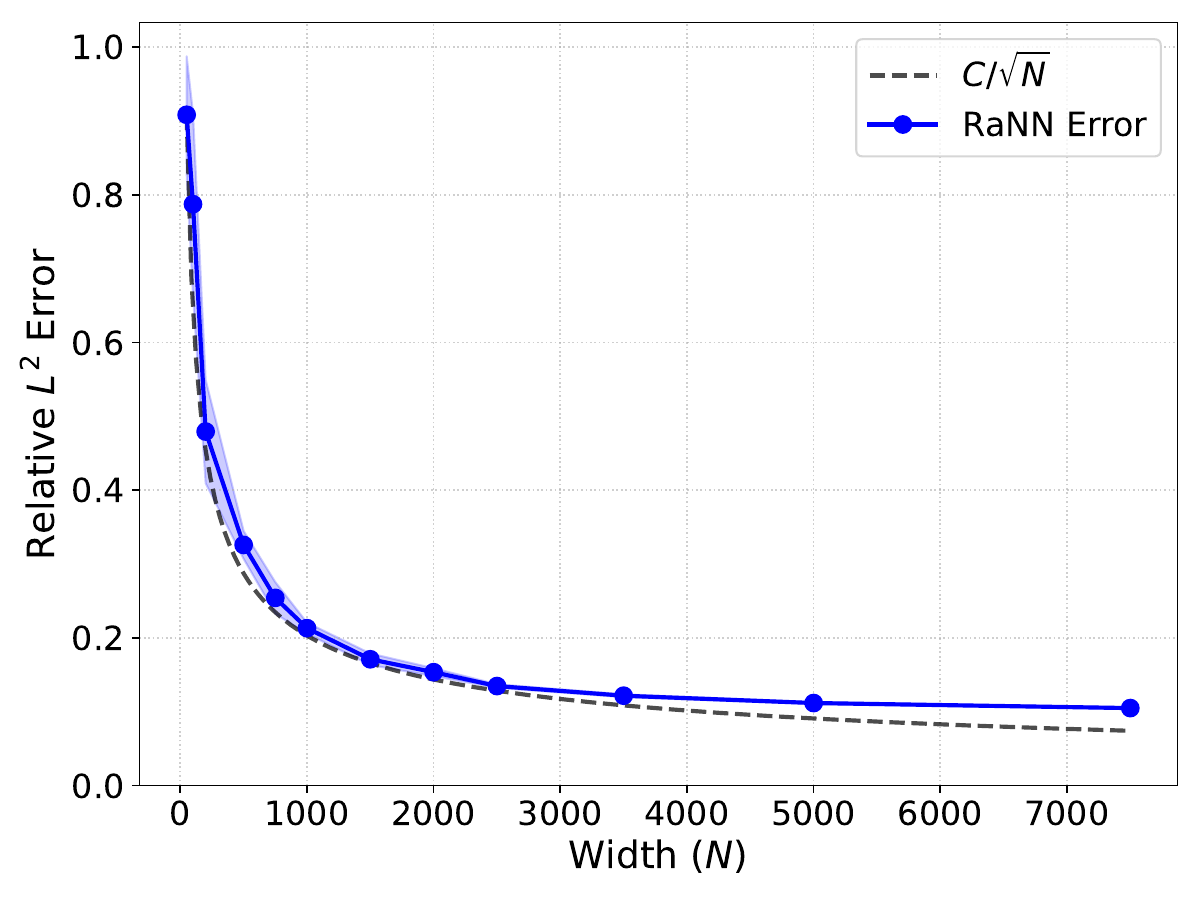}
        \caption{$d = 4$}
    \end{subfigure}
    \begin{subfigure}{0.4\textwidth}
        \includegraphics[width=\linewidth]{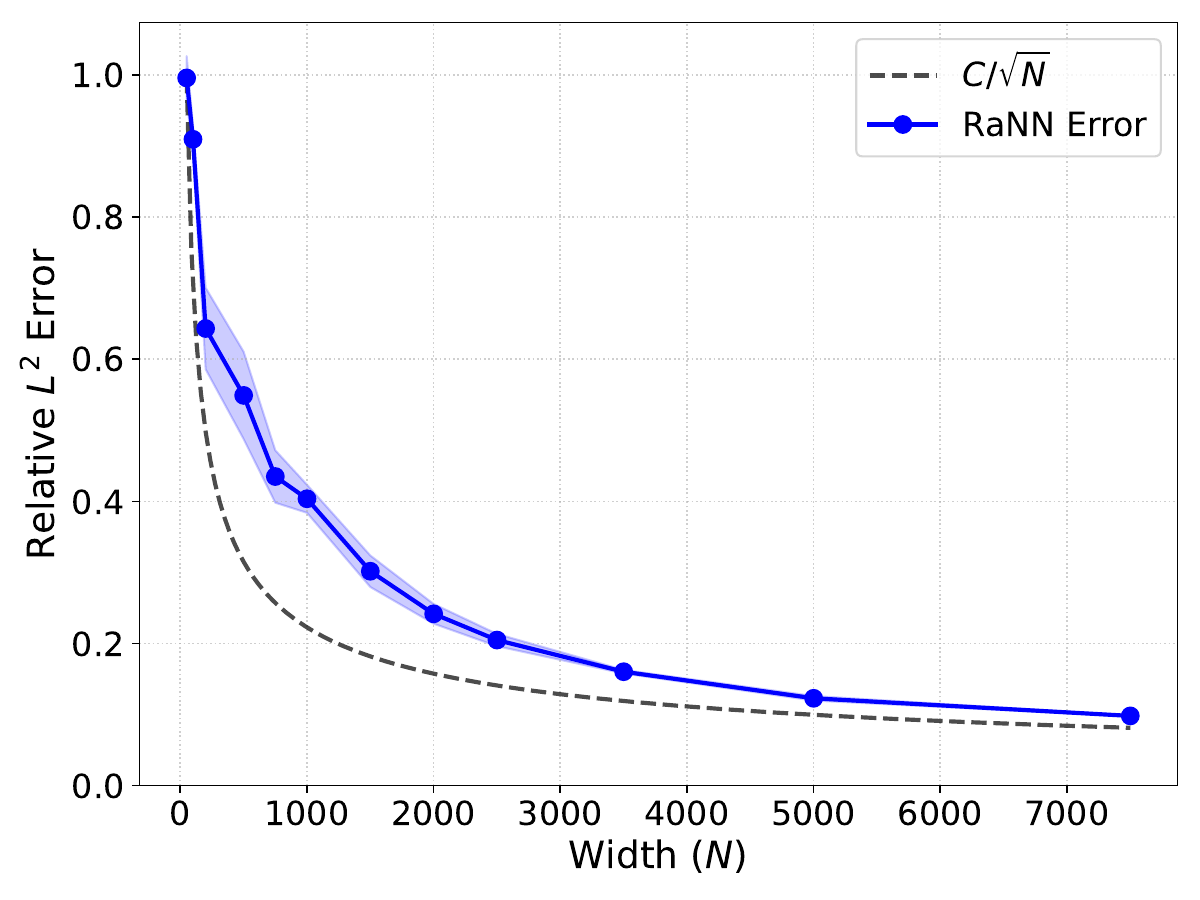}
        \caption{$d = 5$}
    \end{subfigure}
    \caption{Approximation error of RaNNs solving PME in dimensions $d = 4,5$ with varying widths. The shaded band indicates the region within one standard deviation of the mean relative $L^2$ error.}
    \label{fig:RaNN-PME-d45}
\end{figure}

\subsection{Experiments for compressible Navier-Stokes} \label{subsec:NSE}
We now turn to the one-dimensional compressible Navier-Stokes system, given by \eqref{NSE}. 
As baseline, we consider the travelling shock-wave solutions considered by \cite{dalibard2020existence}, where the pressure $p_\epsilon(v) = \epsilon / (v-1)^\gamma$ for $\gamma>0$ is taken, where $v=1/\rho$. The travelling wave solutions can be obtained by taking the ansatz $(v,u)(t,x) = (\mathfrak{v}, \mathfrak{u})(x- st)$, where $s$ is the shock speed. This reduces the PDE to an ODE for $\mathfrak{v}$. The velocity $\mathfrak{u}$ can then be obtained from the relationship $\mathfrak{v} = - s \mathfrak{u}$ which follows from the conservation of mass.

For our experiment, we consider the domain $(0,T) \times (-5,5)$ with $T=1.0$, $\mu =1,\epsilon = 10^{-3}, \gamma = 2$. 
We then compute RaNN approximations $(\mathfrak{v}_N, \mathfrak{u}_N)$  for different widths $N$ and measure the relative error to  the baseline solution $(\mathfrak{v}, \mathfrak{u})$. The results can be seen in Figure \ref{fig:NS}, which shows that the errors are close to the $C/\sqrt{N}$ curve, in support of the upper bound of Theorem \ref{thm1}. Further experimental details and a visual depiction of the travelling-wave solution can be found in Appendix \ref{apdx-NSE}.
\begin{figure}
    \centering
    \includegraphics[width=0.4\linewidth]{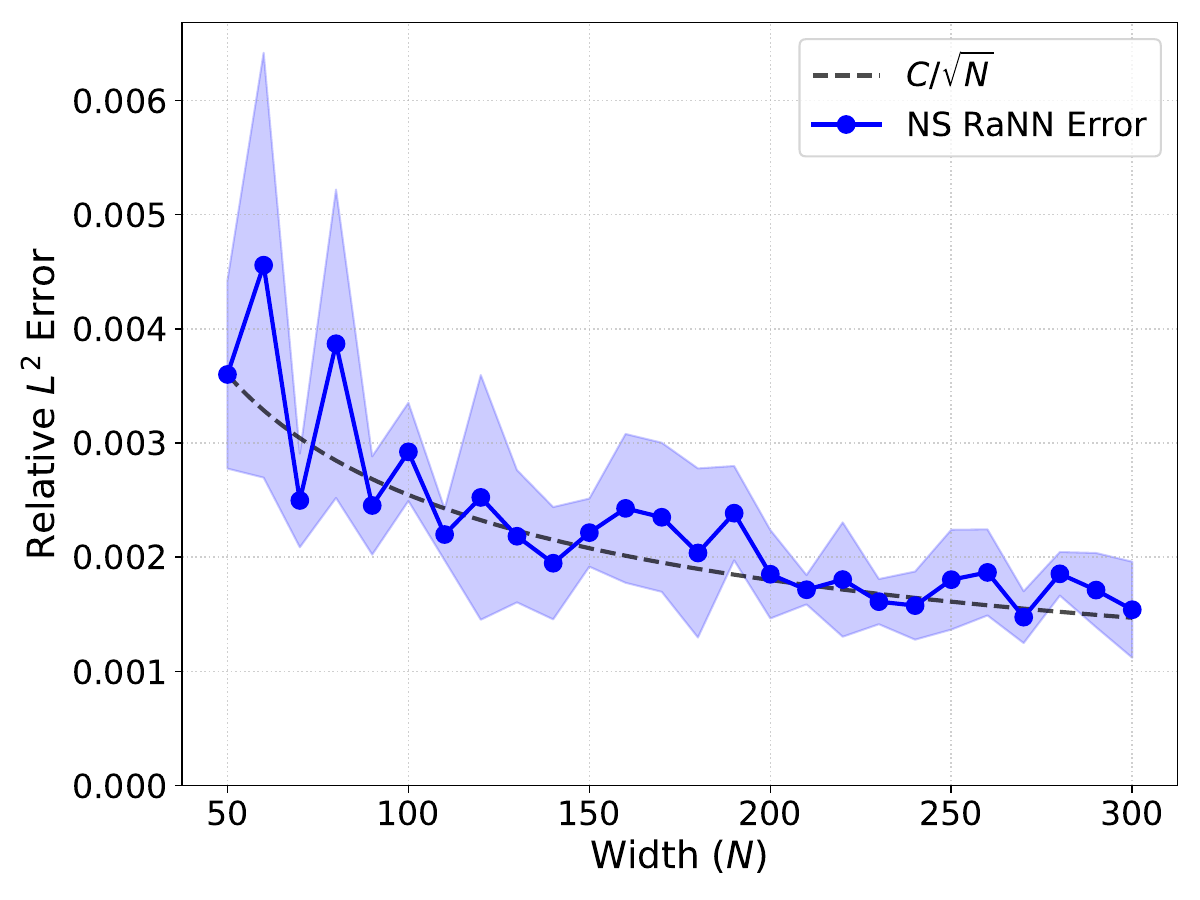}
    \caption{Approximation error of RaNNs solving the compressible NS system with varying widths. The shaded band indicates the region within one standard deviation of the mean relative $L^2$ error.}
    \label{fig:NS}
\end{figure}
\paragraph{Dimensionality of experiments}
To explain the choice of dimensions in our experiments, we note that high-dimensional studies of randomised neural network approximations for solutions to PDEs have been successfully carried out for linear equations (see e.g. \cite{gonon2023random}). For non-linear equations, going beyond low-dimensional cases is much more challenging. Finding high quality analytical reference solutions is extremely rare. For example, even constructing travelling wave solutions for compressible Navier-Stokes beyond an effectively one-dimensional setting is highly non-trivial. Secondly, even for the Barenblatt profile, when scaling to higher dimensions the support spreads more slowly which can pronounce sampling errors.

\section{Conclusion}
In this work, we have shown that neural networks with randomly generated hidden weights (RaNNs) are capable of efficiently approximating functions that belong to time-dependent Sobolev spaces. 

Future work may investigate whether our results can be extended to solutions which are less smooth than those considered here. Moreover, our results assume a heavy-tailed weight distribution, while numerical experiments indicate that the same rate also holds for Gaussian weights. It would also be interesting to see whether the constant of proportionality (appearing in \eqref{thm-ineq}) can be improved, either using the theory of ridgelet transforms or an alternative representation. 
It will also be important to explore whether RaNNs are prone to the same spectral bias issues that PINNs face, especially for complex PDEs such as compressible Navier-Stokes  in the turbulent regime. 
\bibliographystyle{plain}
\bibliography{neurips}

\appendix
\section{Proofs}
\subsection{Proof of Proposition \ref{prop:u-rep}}
\label{sec:proofPROP}
\begin{proof} Fix $m \ge 0$ and an arbitrary $\psi \in \mathcal{S}(\mathbb{R})$. Using the definition of the ridgelet transform from \cite{sonoda2017neural}, we have for any $s\ge0$, 
    \begin{equation} 
       \mathcal{R}_\psi u (\tau, \mathbf{a}, b) := \int_{\mathbb{R}^{d+1}} u(t,\mathbf{x})\psi(\tau t +\mathbf{a}\cdot \mathbf{x}-b)\|(\tau, \mathbf{a})\|^s~dtd\mathbf{x}.
    \end{equation} Note that the dual ridgelet transform $R_\eta^\dagger T$ of $T:\mathbb{R}^{d+2} \to \mathbb{R}$ with respect to $\eta : \mathbb{R} \to \mathbb{R}$ is defined as \begin{equation}
        \mathcal{R}_\eta^\dagger T(t,\mathbf{x}) := \int_{\mathbb{R}^{d+2}} T(\tau, \mathbf{a},b) \eta(\tau t +\mathbf{a}\cdot \mathbf{x}-b)\|(\tau, \mathbf{a})\|^{-s} d\tau d\mathbf{a} db
    \end{equation} We will take $s=0$ in the above definitions.
    Recall that $\sigma \in \mathcal{S}_0'(\mathbb{R})$ is a fixed activation. Theorem 5.6 of \cite{sonoda2017neural} says that if we can find a function $\psi$ so that $K_{\psi, \sigma} \in (0, \infty)$
    then the reconstruction formula $u(t,\mathbf{x}) =  \frac{1}{K_{\psi, \sigma}}R_\sigma^{\dagger}R_\psi u(t,\mathbf{x})$ holds. Therefore, to finish the proof we need to find $\psi$ so that $K_{\psi, \sigma} = 1$ and $   |\widehat{\psi}(\omega)| \le C|\omega|^{m}$ for $|\omega|<1$ and some $C>0$ (condition \eqref{psi-growth}). We will make use of Corollary 5.5 of \cite{sonoda2017neural}, which says that if $\zeta^\beta \widehat{\sigma}(\zeta) \in C(-\delta,\delta)$ for some $\delta>0, \beta\in \mathbb{N}$ and $\psi_0 \in \mathcal{S}(\mathbb{R})$ is such that \begin{equation}
        J_\sigma := \int_\mathbb{R} \zeta^\beta \overline{\widehat{\psi_0}(\zeta)} \widehat{\sigma}(\zeta) ~d\zeta \ne 0,
    \end{equation} then 
    \begin{equation*}
        \psi = \Lambda^{d+1} \psi_0^{(\beta)} 
    \end{equation*}is admissible with $\sigma$, where for $m \in \mathbb{N}$, $\Lambda^{m}$ is the backprojection filter that satisfies \begin{equation} \label{back-filter}
        \widehat{\Lambda^m F}(\mathbf{u}, \omega) = i^m|\omega|^m \widehat{F}(\mathbf{u}, \omega).
    \end{equation} 
  We  let $G(z) := \exp(-z^2/2)$ be the standardised gaussian and take 
    \begin{equation}
        \psi_0(z) := \frac{d^{2n}}{dz^{2n}} G(z),
    \end{equation} where $n$ is an arbitrary positive integer. Then since we assume $\sigma \in \mathcal{T}_k$ (see Definition \ref{def:Tk}), we have $\zeta^\beta \widehat{\sigma}(\zeta) \in C(-\delta, \delta)$ for some $\delta>0$ and 
    \begin{equation}
        \begin{aligned}
            J_\sigma = \int_\mathbb{R} \zeta^\beta \widehat{G^{(2n)}}(\zeta)~ \widehat{\sigma}(\zeta)~d\zeta &=  \sqrt{2\pi}(-1)^{n} \int_\mathbb{R} \zeta^{2n+\beta}G(\zeta) ~\widehat{\sigma}(\zeta)~d\zeta \ne 0,
        \end{aligned}
    \end{equation} 
    where we have used $\widehat{G}(\zeta) = \sqrt{2\pi} G(\zeta)$.
    It is important to observe that we can take $n \in \mathbb{Z}^+$ to be as large as we like in this construction, since this is required by the second point in Definition \ref{def:Tk} (of $\mathcal{T}_k$).
    
    Thus, using Corollary 5.5 of \cite{sonoda2017neural} we can say that $\psi = \Lambda^d \psi_0^{(\beta)}$ and $\sigma$ are jointly admissible. We will normalise $\psi$ and therefore we can assume $K_{\psi, \sigma} = 1$.  Applying the reconstruction formula (Theorem 5.6 of \cite{sonoda2017neural}) gives
    \begin{equation}
        \begin{aligned}
            u(t,\mathbf{x}) &= R_\sigma^{\dagger}R_\psi u(t,\mathbf{x}) \\[1ex] &=  \int_{\mathbb{R}}\int_{\mathbb{R}^d} \int_{\mathbb{R}} R_\psi u(\tau, \mathbf{a}, b) \sigma (\tau t+\mathbf{a}\cdot \mathbf{x}-b) dbd\mathbf{a}d\tau.
        \end{aligned}
    \end{equation} Lastly, we need to verify that $A_{\psi, m} <+\infty$. Using the property of the backfilter from \eqref{back-filter}, we have \begin{equation} \label{psi-construction} \begin{aligned} 
        \widehat{\psi}(\omega) &= i^{d+1}|\omega|^{d+1} \widehat{\psi_0'} = -i^{d+2}\omega|\omega|^{d+1} \widehat{\psi_0} \\[1ex] &= \sqrt{2\pi}~i^{d+2n+2}\omega|\omega|^{d+2n+1} G(\omega).
        \end{aligned}
    \end{equation} Taking the absolute value gives $ |\widehat{\psi}(\omega)| = \sqrt{2\pi}|\omega|^{d+2n+2} G(\omega)$. Then choosing $n$ so that $d+2n+2 > m$ (e.g. $n> m-d-2 $), we have $|\widehat{\psi}(\omega)| \le \sqrt{2\pi}|\omega|^m$ for $|\omega| < 1$.
\end{proof}
\begin{remark}[Proving admissibility of $\tanh, \cos $ and sigmoid] \label{rmk:tanh} We show here that each of these activations belong to $\mathcal{T}_k$ for $k\ge0$.
    Firstly, they each belong to $\mathcal{S}_0'(\mathbb{R})$; this is explicitly mentioned in Section 6.1 of \cite{sonoda2017neural}. In the case of $\sigma = \tanh$, we have $\widehat{\tanh}(\zeta) = -i\pi/ \sinh(\pi\zeta/2)$ so $\zeta \widehat{\sigma}(\zeta)$ is continuous around $0$, and
    \begin{equation}
        J_\sigma = \sqrt{2\pi}(-1)^{n+1} i\pi\int_\mathbb{R}   \frac{\zeta^{2n+1}G(\zeta)}{\sinh(\frac{\pi \zeta}{2})}~d\zeta \ne 0.
    \end{equation} 
    Furthermore, $\tanh$  and each of its derivatives are bounded uniformly. Therefore $\tanh \in \mathcal{T}_k$ for any $k\ge0$. 
    For $\sigma = \cos$,  $\zeta  \widehat{\cos}(\zeta) = \delta(\zeta+1) - \delta(\zeta-1)$ which is continuous (in fact, zero) in a neighbourhood of $0$, and satisfies \begin{equation}
        J_\sigma = \sqrt{2\pi}(-1)^{n}  \zeta^{2n+1}\widehat{G}(\zeta) |_{-1}^1 = \sqrt{2\pi}(-1)^n \left[  G(1)+ G(-1)\right] \ne 0.
    \end{equation} Therefore it also belongs to $\mathcal{T}_k$ for all $k\ge0$.
    For $\sigma = \text{sigmoid}$, the argument is similar; $\widehat{\sigma} = -i \pi \text{csch}(\pi \zeta ) + i\pi \delta(\zeta)$, so $\zeta \widehat{\sigma}(\zeta)$ is continuous around the origin and $J_\sigma \ne 0$. Each of its derivatives are also uniformly bounded.
\end{remark}

\subsection{Sketching the proof of Lemma \ref{lemma:parseval}} \label{sec:sketchproofparseval}
For convenience, we start with a sketch of Lemma \ref{lemma:parseval}. The full proof is provided in \ref{sec:proofparseval} below. 
\begin{proof}[Proof (Outline)] 
\textbf{Step 1: Plancherel in $b$. } We apply the Plancherel formula in $b$ to get
 \begin{equation} 
    \begin{aligned}
    I &= \int_{\mathbb{R}^{d+2}} | \widehat{\mathcal{R}_\psi u}(\tau, \mathbf{a}, \omega)|^2 (1+|\tau|^{2})^p(1+\|\mathbf{a}\|^{2})^q~d\tau d\mathbf{a} d\omega \\[1ex] &+ \int_{\mathbb{R}^{d+2}} | \partial_\omega \widehat{\mathcal{R}_\psi u}(\tau, \mathbf{a}, \omega)|^2 (1+|\tau|^{2})^p(1+\|\mathbf{a}\|^{2})^q~d\tau d\mathbf{a} d\omega 
    \end{aligned}
    \end{equation}

Then we show using the definition of the Fourier Transform that $\widehat{R_\psi u}(\omega) = \widehat{u}(\tau \omega, \mathbf{a}\omega) \widehat{\psi}(-\omega)$, so $I$ can be split up as 
\begin{equation}
    \begin{aligned}
        I &\le \int_\mathbb{R} \int_{\mathbb{R}^{d}} \int_\mathbb{R}  |  \widehat{u}(\tau \omega, \mathbf{a}\omega) \widehat{\psi}(-\omega) |^2  (1+|\tau|^{2})^p(1+\|\mathbf{a}\|^{2})^q  ~d\omega  d\mathbf{a} d\tau \\[1ex] &+  \int_\mathbb{R} \int_{\mathbb{R}^{d}} \int_\mathbb{R}  |\partial_\omega \left( \widehat{u}(\tau \omega, \mathbf{a}\omega) \widehat{\psi}(-\omega) \right)|^2  (1+|\tau|^{2})^p(1+\|\mathbf{a}\|^{2})^q  ~d\omega d\mathbf{a} d\tau =: I_1 + I_2
    \end{aligned}
\end{equation}
The second term $I_2$ is the main obstacle in the proof. \\
\textbf{Step 2: Change of variables.}  Performing a change of variables $s =\tau \omega, \xi = \mathbf{a} \omega$ allows us to estimate $I_2$ as
 \begin{equation} \label{I-split}
        \begin{aligned}
            I_2 &\le  \int_\mathbb{R} \int_{\mathbb{R}^{d}} \int_\mathbb{R}  | \widehat{u}(s, \xi) \partial_\omega \widehat{\psi}(-\omega) |^2  (1+(\frac{s}{\omega})^2)^p(1+(\frac{\|\xi\|}{\omega})^2)^q  |\omega|^{-(d+1)}~d\omega d\xi ds  \\[1ex] &+ \int_\mathbb{R} \int_{\mathbb{R}^{d}} \int_\mathbb{R}  | \frac{1}{\omega} (s\partial_s +\xi \nabla_\xi ) \widehat{u}(s, \xi) \widehat{\psi}(-\omega) |^2  (1+(\frac{s}{\omega})^2)^p(1+(\frac{\|\xi\|}{\omega})^2)^q  |\omega|^{-(d+1)}~d\omega d\xi ds \\[1ex] &=: I_{2A}+I_{2B}
        \end{aligned}
    \end{equation}

\textbf{Step 3: Estimating $I_{2A}$ and $I_{2B}$.} 
Here we show that one can find $M$ large enough (but still finite) so that with the corresponding $\psi$ generated from Proposition \ref{prop:u-rep}, the integrals $I_{2A}, I_{2B}$ are both bounded by constants depending on the $L^2$ norms of $\psi$. Most of the difficulty lies in $I_{2B}$ due to the derivatives that appear. Nonetheless, using properties of the Fourier transform and Fubini to exchange the order of integration, $I_2$ can be estimated as
   \begin{equation}
        \begin{aligned}
           I_{2B} &\le  C_{\psi_2} \int_{\mathbb{R}^d}  |\widehat{(-it) u)}|^2 (s, \xi) (1+ |s|^2)^{p+r}(1+\|\xi\|^{2})^{q}~d\xi ds \\[1ex] &+ C_{\psi_3}\int_{\mathbb{R}^d}  | \widehat{(-i\mathbf{x}) u})|^2 (s, \xi) (1+ |s|^2)^{p}(1+\|\xi\|^{2})^{q+r}~d\xi ds,
        \end{aligned}
    \end{equation}
    where
    \begin{equation}
        \begin{aligned}
              &C_{\psi_2} = \int_\mathbb{R} | \partial_\omega \widehat{\psi}(-\omega)|^2 \max(1, |\omega|^{-(2p+2q)}) |\omega|^{-(d+1)}~d\omega., \\[1ex]
            &C_{\psi_3} = \int_\mathbb{R} |\widehat{\psi}(-\omega)|^2 \max(1, |\omega|^{-(2p+2q)}) |\omega|^{-(d+3)}~d\omega.
        \end{aligned}
    \end{equation}
    Then by definition of the space $H^p(\mathbb{R}; H^q(\mathbb{R}^d))$ and the compact support of $u$, we get 
    \begin{equation}
        I_{2B} \le  C_{\psi_3}(2T+2R) (\|  u\|_{H^{p+1}_t H^{q}_x}^2 +  \|u\|_{H^{p}_tH^{q+1}_x}^2),
    \end{equation}
    where $C_{\psi_1}$ is a constant similar in form to $C_{\psi_3}$, and appears due to $I_1$ (which we did not look at in this outline).
    Our final job is to show that we can construct $\psi$ so that each of $C_{\psi_1}, C_{\psi_2}, C_{\psi_3}$ can be suitably bounded. This is done for each constant by separating $|\omega| < 1$ and $|\omega| \ge 1$. On $|\omega| < 1$ we take advantage of Proposition \ref{prop:u-rep} which allows us to choose $\psi$ with enough vanishing moments at $0$ to ensure finiteness. On $|\omega| \ge 1$ we use $\psi \in \mathcal{S}(\mathbb{R})$. This is the main idea. The full details of the proof are deferred to Appendix \ref{sec:proofPME}.
\end{proof}

\subsection{Proof of Lemma \ref{lemma:parseval}} \label{sec:proofparseval}
\begin{proof}
\textbf{Step 1: Plancherel in $b$. } 
Firstly, let's note that using the Plancherel formula in $b$ we have
\begin{equation} \begin{aligned}
     \int_\mathbb{R} |\mathcal{R}_\psi u(\tau, \mathbf{a}, b)|^2 (1+b^2) ~db &= \int_\mathbb{R} |\mathcal{R}_\psi u(\tau, \mathbf{a}, b)|^2 ~db + \int_\mathbb{R} |b \mathcal{R}_\psi u(\tau, \mathbf{a}, b)|^2 ~db \\[1ex]
     &= \int_\mathbb{R} |\widehat{\mathcal{R}_\psi u}(\tau, \mathbf{a}, \omega)|^2 ~d\omega + \int_\mathbb{R} |\partial_\omega \widehat{\mathcal{R}_\psi u}(\tau, \mathbf{a}, \omega)|^2 ~d\omega.
    \end{aligned}
\end{equation}
    Then
    \begin{equation}  \label{plancherel-b}
    \begin{aligned}
    I &= \int_{\mathbb{R}^{d+2}} | \widehat{\mathcal{R}_\psi u}(\tau, \mathbf{a}, \omega)|^2 (1+|\tau|^{2})^p(1+\|\mathbf{a}\|^{2})^q~d\tau d\mathbf{a} d\omega \\[1ex] &+ \int_{\mathbb{R}^{d+2}} | \partial_\omega \widehat{\mathcal{R}_\psi u}(\tau, \mathbf{a}, \omega)|^2 (1+|\tau|^{2})^p(1+\|\mathbf{a}\|^{2})^q~d\tau d\mathbf{a} d\omega 
    \end{aligned}
    \end{equation}
    We now find an expression for $\widehat{\mathcal{R}_\psi u}$. Using the definition of the Fourier transform and the ridgelet transform,
    \begin{equation} \begin{aligned}
        \widehat{R_\psi u}(\omega) &= \int_\mathbb{R} e^{-i\omega b} ~ \mathcal{R}_\psi u ~db \\[1ex] &= \int_\mathbb{R} e^{-i\omega b} \left( \int_{\mathbb{R}^{d}} \int_{\mathbb{R}} u(t,\mathbf{x}) \psi(\tau t + \mathbf{a}\cdot \mathbf{x} - b) ~dtdx\right)db \\[1ex] &= \int_{\mathbb{R}^{d}}\int_{\mathbb{R}} u(t,\mathbf{x}) \left( \int_\mathbb{R} e^{-i\omega b} \psi(\tau t+ \mathbf{a}\cdot \mathbf{x}-b)~db \right)~dtdx,
        \end{aligned}
    \end{equation} where we have also used Fubini to exchange the order of the integrals. This is valid since we assume $u$ is compactly supported, and therefore $u\psi$ is integrable on $\mathbb{R}^{d+1}$. By a change of variables $p=\tau t+\mathbf{a}\cdot \mathbf{x}-b$ the inner integral is equivalent to $e^{-i\omega(\tau t + \mathbf{a} \cdot \mathbf{x})}\int_\mathbb{R} e^{i\omega p} \psi(p)~dp = e^{-i\omega (\tau t + \mathbf{a} \cdot \mathbf{x})}\widehat{\psi}(-\omega)$. Therefore
    \begin{equation}
        \begin{aligned}
             \widehat{R_\psi u}(\omega) &= \widehat{\psi}(-\omega)\int_{\mathbb{R}^d} u(t,\mathbf{x}) e^{-i\omega(\tau t + \mathbf{a}\cdot \mathbf{x})} ~dtd\mathbf{x}  \\[1ex]
             &= \widehat{u}(\tau \omega, \mathbf{a}\omega) \widehat{\psi}(-\omega) .
        \end{aligned}
    \end{equation} Inserting this into \eqref{plancherel-b}, we have 
    \begin{equation}
        \begin{aligned}
            I &= \int_\mathbb{R} \int_{\mathbb{R}^{d}} \int_\mathbb{R}  |  \widehat{u}(\tau \omega, \mathbf{a}\omega) \widehat{\psi}(-\omega) |^2  (1+|\tau|^{2})^p(1+\|\mathbf{a}\|^{2})^q  ~d\omega d\tau d\mathbf{a} \\[1ex] &+  \int_\mathbb{R} \int_{\mathbb{R}^{d}} \int_\mathbb{R}  |\partial_\omega \left( \widehat{u}(\tau \omega, \mathbf{a}\omega) \widehat{\psi}(-\omega) \right)|^2  (1+|\tau|^{2})^p(1+\|\mathbf{a}\|^{2})^q  ~d\omega d\tau d\mathbf{a} \\[1ex]  &=: I_1 + I_2.
        \end{aligned}
    \end{equation}
    \textbf{Step 2: Change of variables.}
    We now introduce the change of variables $  s = \tau \omega, \xi= \mathbf{a}\omega.$ The corresponding jacobian is $d\mathbf{a}d\tau = |\omega|^{-(d+1)}d\xi ds$ and the operator changes to 
    \begin{equation}
    \begin{aligned}
        \partial_\omega |_{(\tau, \mathbf{a})} &= \partial_\omega|_{(s, \xi)}  + \frac{1}{\omega}(s \partial_s+ \xi \nabla_\xi) |_{(s, \xi)} =: \partial_\omega|_{(s, \xi)} + \mathcal{D}.
    \end{aligned}
    \end{equation}
    Therefore, separating the $\omega$ terms using Fubini,
    \begin{equation}
        \begin{aligned}
            I_1 &=  \int_{\mathbb{R}^{d+2}} |\widehat{u}(s, \xi) |^2 |\widehat{\psi}(-\omega) |^2 (1+|\frac{s}{\omega}|^{2})^p(1+( \frac{\|\mathbf{\xi}\|}{\omega})^{2})^q |\omega|^{-(d+1)} ~d\omega ds d\xi   \\[1ex] &\le C_{\psi_1} \int_{\mathbb{R}^{d+1}} |\widehat{u}(s, \xi) |^2  (1+|s|^{2})^p(1+\|\xi\|^{2})^q  ~ds d\xi,
        \end{aligned}
    \end{equation}
    where
    \begin{equation}
       C_{\psi_1} =  \int_\mathbb{R} |\widehat{\psi}(-\omega)|^2 \max(1, |\omega|^{-2p}) \max(1, |\omega|^{-2q})|\omega|^{-(d+1)} ~d\omega. 
    \end{equation}
    We will show the boundedness of this constant at the end of the proof.  
    For $I_2$,
    \begin{equation} 
        \begin{aligned}
            I_2 &\le  \int_\mathbb{R} \int_{\mathbb{R}^{d}} \int_\mathbb{R}  | \widehat{u}(s, \xi) \partial_\omega \widehat{\psi}(-\omega) |^2  (1+(\frac{s}{\omega})^2)^p(1+(\frac{\|\xi\|}{\omega})^2)^q  |\omega|^{-(d+1)}~d\omega ds d\xi \\[1ex] &+ \int_\mathbb{R} \int_{\mathbb{R}^{d}} \int_\mathbb{R}  | \frac{1}{\omega} (s\partial_s +\xi \nabla_\xi ) \widehat{u}(s, \xi) \widehat{\psi}(-\omega) |^2  (1+(\frac{s}{\omega})^2)^p(1+(\frac{\|\xi\|}{\omega})^2)^q  |\omega|^{-(d+1)}~d\omega ds d\xi \\[1ex] &=:I_{2A} + I_{2B}
        \end{aligned}
    \end{equation} 
    \textbf{Step 3: Estimating $I_{2A}$ and $I_{2B}$.} 
    First note that for any $\mathbf{x} \in \mathbb{R}^d$ and $k >0$,
    \begin{equation} \begin{aligned}
    & (1+ \frac{\|\mathbf{x}\|^{2}}{|\omega|^2})^k \le  \max(1, |\omega|^{-2k})(1+\|\mathbf{x}\|^2)^k.
    \end{aligned}
    \end{equation}
    Therefore, for $I_{2A}$ we can separate the $\omega$ dependence and write 
    \begin{equation}
        \begin{aligned}
            I_{2A} \le C_{\psi_2} \int_{\mathbb{R}^{d+1}} |\widehat{u}(s,\xi)|^2(1+s^2)^p(1+\|\xi\|^2)^q dsd\xi,
        \end{aligned}
    \end{equation}
    where \begin{equation}
          C_{\psi_2} = \int_\mathbb{R} | \partial_\omega \widehat{\psi}(-\omega)|^2 \max(1, |\omega|^{-(2p+2q)}) |\omega|^{-(d+1)}~d\omega.
    \end{equation}
    We will consider the boundedness of this constant at the end. For $I_{2B}$, we can estimate the $(s,\xi)$ part as
  \begin{equation} \begin{aligned}
         \int_{\mathbb{R}^{d+1}} & | \frac{1}{\omega} (s\partial_s +\xi \nabla_\xi ) \widehat{u}(s,\xi)|^2~dsd\xi  \le  2 |\omega|^{-2} \left( \int_{\mathbb{R}^{d+1}}| s\partial_s \widehat{u}(s,\xi)|^2+ \int_{\mathbb{R}^{d+1}} | \xi\nabla_\xi \widehat{u}(s,\xi)|^2 \right) \\[1ex] &= 2 |\omega|^{-2} \left( \int_{\mathbb{R}^{d+1}}| s \widehat{(-it)u}(s,\xi)|^2+ \int_{\mathbb{R}^{d+1}} | \xi \widehat{(-i\mathbf{x})u}(s,\xi)|^2 \right) \\[1ex] &\le 2|\omega|^{-2}\left( \int_{\mathbb{R}^{d+1}}| \widehat{(-it)u}(s,\xi)|^2(1+s^2) + \int_{\mathbb{R}^{d+1}} | \widehat{(-i\mathbf{x})u}(s,\xi)|^2(1+\|\xi\|^2)    \right),
        \end{aligned}
    \end{equation} where we used $s^{2} \le 1+s^2$ to obtain the final line. 
   Thus we have
    \begin{equation}
        \begin{aligned}
           I_{2B} &\le  C_{\psi_3} \int_{\mathbb{R}^{d+1}}  |\widehat{(-it) u)}|^2 (s, \xi) (1+ |s|^2)^{p+1}(1+\|\xi\|^{2})^{q}~d\xi ds \\[1ex] &+ C_{\psi_3}\int_{\mathbb{R}^{d+1}}  | \widehat{(-i\mathbf{x}) u})|^2 (s, \xi) (1+ |s|^2)^{p}(1+\|\xi\|^{2})^{q+1}~d\xi ds,
        \end{aligned}
    \end{equation}
    where
    \begin{equation}
        \begin{aligned}
            C_{\psi_3} = 2\int_\mathbb{R} |\widehat{\psi}(-\omega)|^2 \max(1, |\omega|^{-(2p+2q)}) |\omega|^{-(d+3)}~d\omega.
        \end{aligned}
    \end{equation} We will consider this constant at the end of the proof.
    Next, recall the following equivalence
    \begin{equation}
        \int_{\mathbb{R}^{d+1}}  |\widehat{u}(s, \xi)|^2  (1+ |s|^2)^p(1+\|\xi\|^{2})^q ~d\xi ds = \|u\|_{H^p(\mathbb{R} ; H^q(\mathbb{R}^d))}^2.
    \end{equation}

    This gives us (using the compact support of $u$)
    \begin{equation}
        \begin{aligned}
            I &\le (C_{\psi_1}+C_{\psi_2}) \|u\|_{H^p_t H^q_x} + C_{\psi_3} (\| |t| u\|_{H^{p+1}_t H^{q}_x}^2 +  \||\mathbf{x}|u\|_{H^{p}_tH^{q+1}_x}^2)
             \\[1ex] &\le (C_{\psi_1}+C_{\psi_2}) \|u\|_{H^p_t H^q_x}+C_{\psi_3} (2T+2R)  \||u\|_{H^{p+1}_tH^{q+1}_x}^2 \\[1ex]
             &\le  (C_{\psi_1}+C_{\psi_2}+C_{\psi_3} (2T+2R))  \|u\|_{H^{p+1}_tH^{q+1}_x}^2.
        \end{aligned}
    \end{equation}
    It remains to bound the constants $C_{\psi_1}, C_{\psi_2}$ and $C_{\psi_3}$. Starting with the most recent $C_{\psi_3}$, we consider $|\omega|<1$ and $|\omega| \ge 1$ separately. On $|\omega| < 1$, we have
    \begin{equation*}
        C_{\psi_3} =  2\int_{-1}^1 |\widehat{\psi}(-\omega)|^2 \max(1, |\omega|^{-(2p+2q)}) |\omega|^{-(d+3)}~d\omega \le 2\int_{-1}^1 |\widehat{\psi}(-\omega)|^2 |\omega|^{-(2p+2q+d+3)}~d\omega,
    \end{equation*}
    so letting $M = (2p+2q+d+3)/2$ and choosing $\psi$ as per the construction of Proposition \ref{prop:u-rep} (see \eqref{psi-construction}), we have
    \begin{equation}
        |\widehat{\psi}(\omega)| \le \sqrt{2\pi}|\omega|^M
 \text{ for } |\omega| < 1.    \end{equation}
    This implies that the integral is bounded by $\int_{-1}^{1}2\pi d\omega=4\pi$. On $|\omega| \ge 1$ we can bound it by $\|\psi\|_{L^2(\mathbb{R})}^2$ (by Plancherel), and so $C_{\psi_3} \le 8\pi + 2\|\psi\|_{L^2(\mathbb{R})}$.  We can apply the same estimate to $C_{\psi_1}$ to get $C_{\psi_1} \le 4\pi + \|\psi\|_{L^2(\mathbb{R})}^2$. For $C_{\psi_2}$, recall from \eqref{psi-construction} that $ \widehat{\psi}(\omega) = \sqrt{2\pi}~i^{m}\omega|\omega|^{m-1} G(\omega)$, for $m$ which we can choose arbitrarily large. A simple computation gives us on $|\omega| < 1$ that $|\partial_\omega \widehat{\psi}| \le \sqrt{2\pi}(m+1)\omega^{m-1}G(\omega)$. Similarly, on $|\omega| \ge 1$ we get $|\partial_\omega \widehat{\psi}| \le \sqrt{2\pi}m\omega^{m+1}G(\omega)$. Therefore on $|\omega| < 1$, by choosing the same $m=M$, 
    \begin{equation}
        \begin{aligned}
          \int_{-1}^1 | \partial_\omega \widehat{\psi}(-\omega)|^2 \max(1, |\omega|^{-(2p+2q)}) |\omega|^{-(d+1)}~d\omega  &\le 2\pi(M+1)^2 \int_{-1}^1 |\omega|^2~d\omega  \\[1ex] &\le 4\pi (M+1)^2.
        \end{aligned}
    \end{equation} For $|\omega| > 1$, notice that $|\partial_\omega \widehat{\psi}| \le |\omega| |\widehat{\psi}(\omega)|$ and so
    \begin{equation}
        \begin{aligned}
            \int_{\mathbb{R} \backslash B_1(0)} &| \partial_\omega \widehat{\psi}(-\omega)|^2 \max(1, |\omega|^{-(2p+2q)}) |\omega|^{-(d+1)}~d\omega  \le  \int_{\mathbb{R} \backslash B_1(0)} |\omega\widehat{\psi}(\omega)|^2~d\omega.
            \end{aligned}
    \end{equation} and so $C_{\psi_2} \le 4\pi(M+1)^2 + \| \psi'\|_{L^2(\mathbb{R})}^2$. This leads to \begin{equation}
        I \le  \left[ (4\pi + \|\psi\|_{L^2(\mathbb{R})})(1+4T+4R)+4\pi(M+1)^2 + \|\psi '\|_{L^2(\mathbb{R})}^2 \right]\|u\|_{H^{p+1}_tH^{q+1}_x}^2,
    \end{equation}
    where $M = (2p+2q+d+3)/2$. 
\end{proof}

\subsection{Proof of Theorem \ref{thm1}} \label{sec:proofTHM}

\begin{proof}\textbf{Step 1: extension of $u$ to $\mathbb{R} \times \mathbb{R}^d$.} 
In order to make use of the previous results, we extend $u$ to the full space $\mathbb{R} \times \mathbb{R}^d$. Since $D$ is Lipschitz, there exists a bounded extension operator $E_x : H^q(D) \to H^q(\mathbb{R}^d)$ (see Theorem 5 in Chapter VI of \cite{stein1970singular}). Applying $E_x$ pointwise in time and combining with a standard one-dimensional extension in time, we obtain $\bar{u} \in H^p(\mathbb{R}; H^q(\mathbb{R}^d))$ such that $\bar{u} = u$ on $(0,T) \times D$ and $\|\bar{u}\|_{H^p(\mathbb{R}; H^q(\mathbb{R}^d))} \le C_1 \|u\|_{H^p(0,T; H^q(D))}$. Let $\eta \in C_c^\infty(\mathbb{R})$ and $\chi \in C_c^\infty(\mathbb{R}^d)$ be cut-off functions equal to $1$ on $[0,T]$ and $D$ respectively. Defining $\tilde{u}(t,x) := \eta(t)\chi(x)\bar{u}(t,x)$
we obtain $\tilde u \in H^p_t H^q_x(\mathbb{R}\times\mathbb{R}^d)$ with compact support and $
\|\tilde u\|_{H^p_t H^q_x} \le C_\Omega \|u\|_{H^p_t H^q_x}$.

Then from \eqref{u-representation}, $\tilde{u}$ can be represented as
    \begin{equation} \begin{aligned}\label{thm1:u}
           \tilde{u}(t,x) &= \int_\mathbb{R} \int_{\mathbb{R}^{d}} \int_\mathbb{R} (\mathcal{R}_\psi \tilde{u}) (\tau, \mathbf{a}, b) \sigma (\tau t+ \mathbf{a} \cdot \mathbf{x}-b)~d\tau d\mathbf{a}db \\[1ex]
           &= \int_\mathbb{R} \int_{\mathbb{R}^{d}} \int_\mathbb{R} \frac{(\mathcal{R}_\psi \tilde{u}) (\tau, \mathbf{a}, b)}{\pi(\tau, \mathbf{a}, b)} \sigma (\tau t+ \mathbf{a}\cdot \mathbf{x}-b)\pi(\tau, \mathbf{a}, b)d\tau d\mathbf{a}db,
           \end{aligned}
    \end{equation}
where $\pi : \mathbb{R}^{d+2} \to \mathbb{R}_+$ is the probability density function from \eqref{pi}. \\

\textbf{Step 2: construction of the unbiased estimator $u_N$.} Our neural network approximation of $u$ will be denoted $u_N$, and we define it as an unbiased estimator of $\tilde{u}$, i.e.
\begin{equation}
    u_N(t,\mathbf{x}) := \frac{1}{N} \sum_{i=1}^N  \frac{R_\psi \tilde{u}(\tau_i, \mathbf{a}_i, b_i)}{\pi(\tau_i, \mathbf{a}_i, b_i)} \sigma (\tau_i t + \mathbf{a}_i \cdot \mathbf{x} -b_i) \equiv \frac{1}{N} \sum_{i=1}^N X_i(t,\mathbf{x}),
\end{equation}
where $(\tau_i, \mathbf{a}_i, b_i) \sim \pi$. By construction, it is clear that $\mathbb{E}_\Theta (u_N) = \tilde{u}$. More generally, for any $0 \le \ell \le p$ and multi-index $\beta$ with $|\beta| \le q$, we have 
\begin{equation}
    \mathbb{E}_\Theta(\partial_t^\ell D_\mathbf{x}^\beta (u_N)) = \partial_t^\ell D_\mathbf{x}^\beta (\tilde{u}).
\end{equation} In order to estimate $u - u_N$, we have
\begin{equation} 
    \begin{aligned}
        \mathbb{E}_\Theta \left( \| \partial_t^\ell D_{\mathbf{x}}^\beta (\tilde{u}-u_N) \|_{L^{2}((0,T) \times D)}^2 \right)  &= \mathbb{E}_\Theta \int_{(0,T) \times D} |\partial_t^\ell D_{\mathbf{x}}^\beta (\tilde{u}-u_N)|^2~dxdt \\[1ex]
        &=  \int_{(0,T) \times D} \text{Var}_\Theta \left( \partial_t^\ell D_{\mathbf{x}}^\beta (\tilde{u}-u_N) \right)~dxdt.
    \end{aligned}
\end{equation}
Let $Y_i(t,\mathbf{x)} := X_i(t,\mathbf{x}) - \mathbb{E}_\Theta (X_1(t,\mathbf{x}))$. Then $\E{(Y_i)} = 0$ and $\Var{(Y_i)} = \Var{(X_i)}$. Moreover, we have $u - u_N = \frac{1}{N}\sum_{i=1}^N Y_i(t,\mathbf{x})$ and therefore
$\Var{(u -u_N)}
=\frac{1}{N} \Var{(Y_1(t,\mathbf{x}))} = \frac{1}{N} \Var{(X_1(t,\mathbf{x}))}$. The same argument works if we replace $u - u_N$ with $\partial_t^\ell D_{\mathbf{x}}^\beta (u-u_N)$. As a result, we have (since $u=\tilde{u}$ on $(0,T)\times D$))
\begin{equation} \label{MC-intermediate}
    \begin{aligned}
        \mathbb{E}_\Theta \left( \| \partial_t^\ell D_{\mathbf{x}}^\beta (u-u_N) \|_{L^{2}((0,T) \times D)}^2 \right) &= \mathbb{E}_\Theta \left( \| \partial_t^\ell D_{\mathbf{x}}^\beta (\tilde{u}-u_N) \|_{L^{2}((0,T) \times D)}^2 \right)\\[1ex] & = \frac{1}{N} \int_{(0,T) \times D}\mathbb{E}_\Theta |\partial_t^\ell D_{\mathbf{x}}^\beta X_i|^2~dxdt.
    \end{aligned}
\end{equation}
\textbf{Step 3: bounding $\mathbb{E}_\Theta (|\partial_t^{\ell} D_{\mathbf{x}}^\beta X_i|^2)$.}
Let $\ell=0, \beta = 0$ to begin with. A direct computation gives 
\begin{equation} \begin{aligned}
     \mathbb{E}_\Theta &(|X_i(t,\mathbf{x})|^2) \\[1ex] &=  C_{\pi} \int_{\mathbb{R}^{d+2}} |R_\psi \tilde{u}(\tau, \mathbf{a}, b)|^2 |\sigma(\tau t+\mathbf{a}\mathbf{x}-b)|^2 (1+ \tau^2)^{\lambda_\tau}(1+\|\mathbf{a}\|^{2})^{\lambda_a}(1 + b^2)~d\tau d\mathbf{a}db \\[1ex] &\le  C_\pi\|\sigma\|_{\infty}^2 \mathcal{L}_\psi\|\tilde{u}\|_{H^{s_1}(\mathbb{R}; H^{s_2}(\mathbb{R}^{d}))}^2,
     \end{aligned}
\end{equation}
where, since $\lambda_\tau>1/2$ and $\lambda_a >d/2$, we have $s_1 > 3/2, s_2 > (d+2)/2$ using Lemma \ref{lemma:parseval}. Therefore, we have \begin{equation} \begin{aligned}
     \mathbb{E}_\Theta \left( \| u-u_N \|_{L^{2}((0,T) \times D)}^2 \right) &\le \frac{T|D| \|\sigma\|_{\infty}^2 \mathcal{L}_\psi}{N}\|\tilde{u}\|_{H^{s_1}(\mathbb{R}; H^{s_2}(\mathbb{R}^{d}))}^2 .
     \end{aligned}
\end{equation}
The same argument works if we take a general $0 \le \ell \le p$ and a non-zero multi-index $\beta = (\beta_1, ..., \beta_d)$ with $|\beta| = q$. In this case we have
\begin{equation}
    \begin{aligned}
         \mathbb{E}_\Theta &(|\partial_t^\ell D_\mathbf{x}^\beta X_i(t,\mathbf{x})|^2) \\[1ex]&\le  C_{\pi} \int_{\mathbb{R}^{d+2}} |R_\psi \tilde{u}(\tau, \mathbf{a}, b)|^2 |\sigma^{(p+q)}|^2 |\tau|^{2p} \|\mathbf{a}\|^{2q}(1+ \tau^2)^{\lambda_\tau}(1+\|\mathbf{a}\|^{2})^{\lambda_a}(1 + b^2)~d\tau d\mathbf{a}db \\[1ex]
        &\le    C_{\pi} \int_{\mathbb{R}^{d+2}} |R_\psi \tilde{u}(\tau, \mathbf{a}, b)|^2 |\sigma^{(p+q)}|^2 (1+ \tau^2)^{p+\lambda_\tau}(1+\|\mathbf{a}\|^{2})^{q+\lambda_a}(1 + b^2)~d\tau d\mathbf{a}db.
 \end{aligned}
\end{equation}
Then using \eqref{parseval}, we get
\begin{equation}
    \begin{aligned}
         \mathbb{E}_\Theta &(|\partial_t^\ell D_\mathbf{x}^\beta X_i(t,\mathbf{x})|^2)\le C_\pi \|\sigma^{(p+q)}\|_\infty^2 \mathcal{L}_\psi \|\tilde{u}\|_{H^{p+s_1}(\mathbb{R}; H^{q+s_2}(\mathbb{R}^d))}^2.
    \end{aligned}
\end{equation}
 By \eqref{MC-intermediate} this implies (using $\|\tilde{u}\|_{H^p_tH^q_x} \le C_\Omega \|u\|_{H^p_tH^q_x}$)
\begin{equation} \label{HpHq-MC}
    \begin{aligned}
        \mathbb{E}_\Theta \left( \| \partial_t^\ell D_{\mathbf{x}}^\beta (u-u_N) \|_{L^{2}((0,T) \times D)}^2 \right) &\le \frac{C_\Omega T|D|C_\pi \|\sigma^{(p+q)}\|_{\infty}^2 \mathcal{L}_\psi}{N}   \|u\|_{H^{p+s_1}(\mathbb{R}; H^{q+s_2}(\mathbb{R}^d))}^2,
    \end{aligned}
\end{equation}
where again $s_1 > 3/2$ and $s_2 > (d+2)/2$.
Summing up this estimate for each of the derivatives up to order $(p,q)$ leads to the claimed result.
\end{proof}

\begin{remark}[Growth factor for the leading coefficient] \label{rmk-growth}
We can give a more precise description of the constant in the right-hand side of \eqref{thm-ineq} for the case where $\sigma \in \mathcal{T}_k$ (e.g. $\tanh$; see Definition \ref{def:Tk}) and either $u \in H^{p}(\mathbb{R};H^{q}(\mathbb{R}^d))$ (i.e. $u$ is already defined on the full space) or $p=q=0$. The condition $p=q=0$ ensures that $C_\Omega=1$ since a zero extension suffices. In such a case, we can show that if we let $\lambda_a = \frac{d}{2} + \alpha(d)$ for a suitable choice of $\alpha$, then the coefficient in the right-hand side of \eqref{thm-ineq} will grow at most polynomially in $d$. We will denote by $C$ a positive constant independent of dimension and by $\Gamma(\cdot)$ the Gamma function.  Then note that \begin{equation}  \begin{aligned}
    C_\pi \le C \int_{\mathbb{R}^d} (1 + \|\mathbf{a}\|^2)^{-\lambda_a} ~d\mathbf{a} .
\end{aligned}
\end{equation}
Going to spherical coordinates and letting $\lambda_a = \frac{d}{2}+\alpha$, we find
\begin{equation}
    \begin{aligned}
        C_\pi = \frac{2 \pi^{d/2}}{\Gamma(d/2)} \int_0^\infty r^{d-1} (1+r^2)^{-\lambda_a}~dr = \pi^{d/2} \frac{\Gamma(\alpha)}{\Gamma(d/2+\alpha)} \le C \pi^{d/2} \frac{\alpha^{\alpha-1/2}}{e^\alpha}.
    \end{aligned}
\end{equation}
Now, to bound $L_\psi$, we need an estimate for $\| \widehat{\psi} \|_{L^2(\mathbb{R})}^2$ and $\| (\widehat{\psi})' \|_{L^2(\mathbb{R})}$. We will use the construction of $\psi$ from the proof of Proposition~\ref{prop:u-rep}. There, we construct $\psi$ with $|\widehat{\psi}(\omega)| \le \sqrt{2\pi} |\omega|$ for $|\omega| \le 1$ and $|\widehat{\psi}(\omega)| \le \sqrt{2\pi} |\omega|^{d+3}G(\omega)$ for $|\omega| > 1$ (we set $n=1$ for this proof but it can be extended to any fixed positive integer $n$). Then we have $\|\widehat{\psi}\|_{L^2(-1,1)}^2 \le 4\pi$ and so
\begin{equation}
    \begin{aligned}
        \| \widehat{\psi} \|_{L^2(\mathbb{R})}^2 &\le 4\pi + \int_{|\omega|>1} \omega^{2(d+2)}e^{-\omega^2}~d\omega \\[1ex] &=  4\pi + \int_{1}^\infty u^{d+3/2} e^{-u} ~du \\[1ex] &\le 4\pi+ \Gamma(d+5/2),
    \end{aligned}
\end{equation}
where we have used the substitution $u = \omega^2$. For $(\widehat{\psi})'$, we can repeat the same process to find 
\begin{equation}
    \| (\widehat{\psi})'\|_{L^2(\mathbb{R})}^2 \le 4\pi + \Gamma(d+7/2). 
\end{equation}
Now we consider the product $C_\pi L_\psi$. Notice that the leading term is given by $C_\pi  \Gamma(d+7/2)$, or more precisely by
\begin{equation}
    \begin{aligned}
     F(d) := \pi^{d/2} \frac{\Gamma(\alpha)}{\Gamma(d/2+\alpha)} \sqrt{\Gamma(d+7/2)}.
    \end{aligned}
\end{equation}
We now prove that $F(d)$ grows at most polynomially in $d$. We will take $\alpha = \lambda d$ for some real number $\lambda >0$ to be decided.
To this end, we will use the following inequality for the Gamma function
\begin{equation} \label{ineq-gamma}
\left(x-\frac12\right)\ln x - x + \frac12\ln(2\pi)
<
\ln\Gamma(x)
<
\left(x-\frac12\right)\ln x - x + \frac12\ln(2\pi)
+ \frac{1}{12x},
\qquad x>0.
\end{equation}
Taking logarithms,
\begin{equation}
    \ln F(d) = \frac{d}{2}\ln\pi + \ln \Gamma(\lambda d) + \frac{1}{2}\ln \Gamma(d+7/2) - \ln \Gamma((\lambda + 1/2)d).
\end{equation}
Then using \eqref{ineq-gamma}, we get
\begin{equation}
    \begin{aligned}
        \ln F(d) < &\frac{d}{2}\ln \pi + \left(\lambda d - \frac{1}{2}\right) \ln(\lambda d) - \lambda d + \frac{1}{2}(d+3)\ln \left(d+\frac{7}{2}\right)  \\[1ex] &- \frac{1}{2} \left(d + \frac{7}{2} \right) - \left( \left(\lambda + \frac{1}{2}\right)d - \frac{1}{2} \right)\ln\left( \left( \lambda + \frac{1}{2}\right)d \right) + \left( \lambda + \frac{1}{2} \right)d \\[1ex] &+ \frac{1}{4}\ln\pi + \frac{1}{12\lambda d} + \frac{1}{12(d+7/2)}.
    \end{aligned}
\end{equation}
Upon simplifying, we find that the right-hand side contains a function which is linear in $d$ and one which is sub-linear in $d$, i.e.
\begin{equation} \begin{aligned} \label{logF}
    \ln F(d) &< d \left[ \frac{1}{2} \ln\pi + \lambda \ln \lambda - \left(\lambda + \frac{1}{2} \right) \ln \left( \lambda + \frac{1}{2} \right)  \right]
    \\[1ex] &+ d \left[ \frac{1}{2} \ln \left(d + \frac{7}{2} \right) - \frac{1}{2} \ln d \right]
    \\[1ex] &+ \frac{3}{2} \ln (d+ 7/2) - \frac{1}{2} \ln \lambda d - \lambda -9/4 - \frac{1}{2} \ln \left( \left( \lambda + \frac{1}{2} \right)d \right).
    \end{aligned}
\end{equation}
The second term can be bounded independently of dimension; using the inequality $\ln(1+x) < x$ for $x > 0$, we have:
\begin{equation}
d \left[ \frac{1}{2} \ln \left(d + \frac{7}{2} \right) - \frac{1}{2} \ln d  \right] = \frac{d}{2} \ln \left( 1 + \frac{7}{2d} \right) < \frac{d}{2} \left( \frac{7}{2d} \right) = \frac{7}{4}.
\end{equation} Therefore, to prove at most polynomial growth in $d$, it suffices to show that 
\begin{equation} \label{poly-condition}
  \mathcal{P}(\lambda, d):=  \frac{1}{2} \ln\pi + \lambda \ln \lambda - \left(\lambda + \frac{1}{2} \right) \ln \left( \lambda + \frac{1}{2} \right)   \le 0.
\end{equation}
We seek an upper bound for the expression, so we apply the lower bound $\ln(1+x) > x - \frac{x^2}{2}$ to the logarithm inside the negative term. For $x = \frac{1}{2\lambda}$, this gives us
\begin{equation}
\ln \left( \lambda + \frac{1}{2} \right) = \ln \lambda + \ln \left( 1 + \frac{1}{2\lambda} \right) > \ln \lambda + \frac{1}{2\lambda} - \frac{1}{8\lambda^2}.
\end{equation}
Substituting these bounds into the left-hand side of \eqref{poly-condition}, we get
\begin{equation} \begin{aligned}
\mathcal{P}(\lambda, d) &< \frac{1}{2} \ln\pi + \lambda \ln \lambda - \left(\lambda + \frac{1}{2} \right) \left( \ln \lambda + \frac{1}{2\lambda} - \frac{1}{8\lambda^2} \right) \\[1ex]
&< \frac{1}{2} \ln\pi + \lambda \ln \lambda - \left( \lambda \ln \lambda + \frac{1}{2} - \frac{1}{8\lambda} + \frac{1}{2} \ln \lambda + \frac{1}{4\lambda} - \frac{1}{16\lambda^2} \right).
\end{aligned} \end{equation}
Grouping the constant terms and simplifying, we find:
\begin{equation}
\mathcal{P}(\lambda, d) < \left( \frac{1}{2} \ln\pi - \frac{1}{2} - \frac{1}{2} \ln \lambda \right) + \left( - \frac{1}{8\lambda} + \frac{1}{16\lambda^2} \right).
\end{equation}
For the expression not to grow exponentially as $d \to \infty$, the dominant constant term must be non-positive. Notice that for $\lambda > 1/2$, the expression inside the second pair of brackets is negative. Thus, a sufficient condition is given by:
\begin{equation}
\frac{1}{2} \ln\pi - \frac{1}{2} - \frac{1}{2} \ln \lambda \le 0,
\end{equation}
i.e.
\begin{equation}
\lambda \ge \frac{\pi}{e}.
\end{equation}
Therefore, choosing $\alpha(d) \ge \frac{\pi}{e} d$ ensures that the constant appearing in front of \eqref{thm-ineq} grows at most polynomially in $d$. Note that we do not expect a better rate than this due to the extra terms appearing in \eqref{logF}.
\end{remark}

\subsection{Proof of Corollary \ref{cor:PME}}
\label{sec:proofPME}
\begin{proof}
    The bound of \eqref{PME-bound} follows from an application of Theorem \ref{thm1}, so we now focus on bounding the PDE residual. We take $m=2$ for simplicity, but the computation is easily generalised to any $m\ge 1$. Using $\partial_tu- \Delta(u^2)=0$, we have
    \begin{equation}
    \begin{aligned}
       \mathcal{J}_{\text{PDE}}(u_N)  &= \int_{(0,T) \times D} (\partial_t u_N - \Delta (u^2_N))^2~dtdx \\[1ex] &= \int_{(0,T) \times D} (\partial_t (u_N-u)  - \Delta (u^2_N - u^2))^2~dtdx \\[1ex]
       &\le 2\|u_N - u\|_{H^1_t L^2_x}^2 + 2\int_{(0,T)\times D} |\Delta (u^2_N - u^2)|^2~dtdx.
    \end{aligned}
\end{equation}
Now, Markov's inequality states that $\mathbb{P}(X> \eta) \leq \mathbb{E}[X] \frac{1}{\eta}$ for $X$ non-negative, $\eta >0$. In particular, for any $\delta \in (0,1)$,  from the bound in Theorem~\ref{thm1} we obtain with the choices $X= \| u-u_N \|_{H^{p}(0,T; H^q(D))}^2 $ and $\eta = \frac{1}{\delta} \frac{C_{\Omega} C_\pi \|\sigma^{(p+q)}\|_\infty^2 T|D|(p+q) \mathcal{L}_\psi}{N} \cdot \|u\|_{H^{p+s_1}(\mathbb{R}; H^{q+s_2}(\mathbb{R}^d))}^2$ that
      \begin{equation} 
   \mathbb{P}(X> \eta) \leq   \mathbb{E}_\Theta \left( \| u-u_N \|_{H^{p}(0,T; H^q(D))}^2 \right) \frac{1}{\eta} \leq \delta.  
    \end{equation}
This implies that $ \mathbb{P}(X\leq \eta)\geq 1-\delta$. That is,  with probability at least $1-\delta$ it holds that $X \leq \eta = \frac{1}{\delta} \frac{C_{\Omega} C_\pi \|\sigma^{(p+q)}\|_\infty^2 T|D|(p+q) \mathcal{L}_\psi}{N} \cdot \|u\|_{H^{p+s_1}(\mathbb{R}; H^{q+s_2}(\mathbb{R}^d))}^2$. 
Then we have with probability $1-\delta$ that \begin{equation}
    \|u_N-u\|_{H^1_t L^2_x}^2 \le \frac{\mathcal{M}_\psi}{N\delta} \|u\|_{H^{1+s_1}_t H^{s_2}_x}^2,
\end{equation}
where $s_1>3/2, s_2 > (d+2)/2$.
Now we note that $\Delta(u^2_N-u^2)  = (u_N+u)\Delta e_N + 2\nabla(u_N+u) \nabla e_N + e_N\Delta (u_N+u)$, where $e_N:=u_N-u$. Combining this with $(a+b)^2 \le 2(a^2+b^2)$ and Holder's inequality,
\begin{equation}
    \begin{aligned}
          \int_{(0,T)\times D} |\Delta (u^2_N - u^2)|^2~dtdx &\le 8\|u_N+u\|_{L^\infty_{t,x}}^2 \|\Delta(u_N-u)\|_{L^2_{t,x}}^2 \\[1ex] &+8 \|\nabla(u_N+u)\|_{L^\infty_{t,x}}^2 \|\nabla(u_N-u)\|_{L^2_{t,x}}^2 \\[1ex] &+ 8 \|\Delta(u_N+u)\|_{L^\infty_{t,x}}^2 \|u_N-u\|_{L^2_{t,x}}^2.
    \end{aligned}
\end{equation}

We also have
\begin{equation}
\begin{aligned}
     \| u\|_{L^\infty_{t,x}} + \| \nabla u\|_{L^\infty_{t,x}} + \| \Delta u\|_{L^\infty_{t,x}} \le C_{emb}\|u\|_{L^\infty_t H^{2+k}_x},
\end{aligned}
\end{equation} for $k >d/2$, where $C_{emb}$ is the constant arising from the Sobolev embedding $H^{k}_x \hookrightarrow L^\infty_{x}$.
Then with the above Markov argument, letting $L:=\| u_N\|_{L^\infty_{t,x}} + \| \nabla u_N\|_{L^\infty_{t,x}} + \| \Delta u_N\|_{L^\infty_{t,x}}$, we have with probability $1-\delta$ that
\begin{equation}
    \begin{aligned}
         \int_{(0,T)\times D} |\Delta (u^m_N - u^m)|^2~dtdx &\le 12(L+ C_{emb}\|u\|_{L^\infty_t H^{2+k}_x}^2)\| u_N-u\|_{L^2_t H^2_x}^2 \\[1ex]
          &\le \frac{12(L+ C_{emb}\|u\|_{L^\infty_t H^{2+k}_x}^2) \mathcal{M}_\psi }{N\delta} \|u\|_{H^{s_1}_t H^{2+s_2}_x}^2.
    \end{aligned}
\end{equation} 
All in all, we get
\begin{equation} \label{residual-pme-aux}
    \begin{aligned}
           \mathcal{J}_{\text{PDE}}(u_N)  &\le  \frac{(12(L+C_{emb}\|u\|_{L^{\infty}_{t}H^{2+k}_x}^2)+1) \mathcal{M}_\psi }{N\delta} \|u\|_{H^{s_1}_t H^{2+s_2}_x}^2,
    \end{aligned}
\end{equation}
with probability $1-\delta$. Using $s_1 > 3/2$ and $s_2 >d/2+2$ we can estimate the leading norm by $\|u\|_{H^{2}_t H^{3+k}_x}^2$. For the general case $m \ge 2$, the argument can be adapted using a mean-value formula to re-write $u_N^m - u^m$ in terms of $e_N$. In this case, we get an extra constant in the coefficient of \eqref{residual-pme-aux}, $C_m$, which will be polynomial in $m$. This leads to the claimed result \eqref{pme-residual-1}. In summary, taking initial data $u_0 \in H^1(\mathbb{R}^d)$, we can find a RaNN which approximates the corresponding solution to the PDE such that the expected loss of the PDE residual is inversely proportional to the number of random features, and the rate is independent of dimension.
\end{proof}

\subsection{Proof of Corollary~\ref{cor:NS}}
\label{sec:proofNS}
\begin{proof}
    Since we are dealing with a system of equations, we need to illustrate how the result of Theorem \ref{thm1} applies. Firstly, given a couple $(\rho, u)$ belonging to class \eqref{nse-class}, the representation result of Proposition \ref{prop:u-rep} tells us that we can find $\psi_\rho, \psi_u \in \mathcal{S}(\mathbb{R})$ such that we have the representations (assuming $\rho, u$ have been smoothly extended to the full space):
    \begin{equation}\begin{aligned}
        \rho(t,x) &= \int_\mathbb{R} \int_{\mathbb{R}^{d}} \int_\mathbb{R} (A_\rho) (\tau, \mathbf{a}, b) \sigma (\tau t+ \mathbf{a} \cdot \mathbf{x}-b)~d\tau d\mathbf{a}db, \\[1ex] u(t,x) &=       \int_\mathbb{R} \int_{\mathbb{R}^{d}} \int_\mathbb{R} (A_u) (\tau, \mathbf{a}, b) \sigma (\tau t+ \mathbf{a} \cdot \mathbf{x}-b)~d\tau d\mathbf{a}db,
    \end{aligned}
    \end{equation}
    where $A_\rho := \mathcal{R}_{\psi_\rho} \rho$ and $A_u := \mathcal{R}_{\psi_u} u$.
    Then as in Theorem \ref{thm1} we can introduce two unbiased estimators
    \begin{equation}
        \begin{aligned}
            \rho_N(t,x) &=  \frac{1}{N} \sum_{i=1}^N  \frac{A_\rho (\tau_i, \mathbf{a}_i, b_i)}{\pi(\tau_i, \mathbf{a}_i, b_i)} \sigma (\tau_i t + \mathbf{a}_i \cdot \mathbf{x} -b_i), \\[1ex]
            u_N(t,x) &=  \frac{1}{N} \sum_{i=1}^N  \frac{A_u(\tau_i, \mathbf{a}_i, b_i)}{\pi(\tau_i, \mathbf{a}_i, b_i)} \sigma (\tau_i t + \mathbf{a}_i \cdot \mathbf{x} -b_i) .
        \end{aligned}
    \end{equation}
    The result of Theorem \ref{thm1} gives us the following bounds immediately:
    \begin{equation}
        \begin{aligned}
            \|\rho_N - \rho\|_{L^2_t H^q_x}^2 &\le \frac{C_{\Omega} C_\pi \|\sigma^{(q)}\|_\infty^2 T|D|q \mathcal{L}_\psi}{N} \cdot \|\rho\|_{H^{s_1}_t H^{q+s_2}_x}^2, \\[1ex] \|u_N - u\|_{L^2_t H^q_x}^2 &\le \frac{C_{\Omega} C_\pi \|\sigma^{(q)}\|_\infty^2 T|D|q \mathcal{L}_\psi}{N} \cdot \|u\|_{H^{s_1}_tH^{q+s_2}_x}^2,
        \end{aligned}
    \end{equation}
    for $s_1,s_2>3/2$ since we are in dimension $d=1$.
    From now on we label
    \begin{equation}
        \mathcal{M}_\psi :=  \frac{C_{\Omega} C_\pi \|\sigma^{(q)}\|_\infty^2 T|D|q \mathcal{L}_\psi}{N}.
    \end{equation}
    The pair $v_N(t,x):=(\rho_N(t,x), u_N(t,x))$ can be interpreted as the outputs of a single random neural network of width $2N$, from which the first claim follows. We now bound the loss functions $\mathcal{J}_{PDE}^1$ and $\mathcal{J}_{PDE}^2$. For $\mathcal{J}_{PDE}^1$ we note the expression $\rho_N u_N - \rho u= \rho_N(u_N-u) + u(\rho_N-\rho)$ to get 
    \begin{equation}
        \partial_x (\rho_N u_N-\rho u) = \partial_x \rho_N (u_N-u) + \rho_N \partial_x (u_N-u) + \partial_x u (\rho_N - \rho) + u \partial_x (\rho_N-\rho).\end{equation} Therefore,
    \begin{equation} 
        \begin{aligned}
            \mathcal{J}_{PDE}^1(\mathbf{v}_N) &= \int_{(0,T)\times D} |\partial_t (\rho_N -\rho)+ \partial_x (\rho_N u_N -\rho u)|^2 \\[1ex] &\le 2\|\partial_t (\rho_N - \rho)\|_{L^2_{t,x}}^2 +4\|\partial_x \rho_N \|_{L^\infty_{t,x}}^2\|u_N-u\|_{L^2_{t,x}}^2 + 4\|\rho_N\|_{L^\infty_{t,x}}^2\|\partial_x (u_N-u)\|_{L^2_{t,x}}^2 \\[1ex] &+ 4\|\partial_xu\|_{L^\infty_{t,x}}^2\|\rho_N-\rho\|_{L^2_{t,x}}^2+ 4\|u\|_{L^\infty_{t,x}}^2\|\partial_x(\rho_N-\rho)\|_{L^2_{t,x}}^2 \\[1ex] &\le4 (\|\partial_x \rho_N \|_{L^\infty_{t,x}}^2+\|\rho_N\|_{L^\infty_{t,x}}^2+\|\partial_xu\|_{L^\infty_{t,x}}^2 + \|u\|_{L^\infty_{t,x}}^2)(\|\rho_N-\rho\|_{H^1_{t,x}}^2 + \|u_N-u\|_{H^{1}_{t,x}}^2) \\[1ex] &=: \mathcal{B_N}(\|\rho_N-\rho\|_{H^1_{t,x}}^2 + \|u_N-u\|_{H^{1}_{t,x}}^2).
        \end{aligned}
    \end{equation}
    Note that $\|\partial_x u\|_{L^\infty_{t,x}}$ is finite since $u \in C(0,T; H^5(D))$. An upper bound for the coefficient  $\mathcal{B}_N$ is $4(\|\rho_N\|_{W^{1,\infty}_{t,x}}^2+\|u\|_{W^{1,\infty}_{t,x}}^2)$. By a similar Markov argument to the proof of Corollary \ref{cor:PME} (see Appendix \ref{sec:proofPME}), for $\delta \in (0,1)$, we have with probability $1-\delta $ that
    \begin{equation}
        \begin{aligned}
            \mathcal{J}_{PDE}^1(\mathbf{v}_N) \le \frac{4(L+\|u\|_{W^{1,\infty}_{t,x}}^2) \mathcal{M}_\psi}{N\delta} ( \|\rho\|^2_{H^{1+s_1}_t H^{1+s_2}_x} + \|u\|^2_{H^{1+s_1}_t H^{1+s_2}_x}),
        \end{aligned}
    \end{equation} where $s_1, s_2 > 3/2$, if the network weights are sampled such that 
    \begin{equation}\label{nse-weights-assumption}L:=\|\rho_N\|_{W^{1,\infty}_{t,x}}^2+ \|u_N\|_{W^{1,\infty}_{t,x}}^2  +  \|\rho\|_{W^{1,\infty}_{t,x}}^2 +  \|u\|_{W^{1,\infty}_{t,x}}^2,
    \end{equation} which is guaranteed to be finite since the neural network $(\rho_N,u_N)$ is smooth.
    For the momentum equation, we use $\rho_N u_N^2 - \rho u^2 = \rho_N u_N (u_N-u) + u  (\rho_N u_N-\rho u)$, and so
    \begin{equation} \begin{aligned}
        \partial_x (\rho_Nu_N^2) - \partial_x (\rho u^2) &= \partial_x (u_N \rho_N)(u_N-u) + u_N \rho_N \partial_x (u_N-u) +  (\rho_N u_N-\rho u) \partial_x u \\[1ex] &+u  \partial_x(\rho_N u_N-\rho u).
        \end{aligned}
    \end{equation}
    Therefore, in a similar way to the continuity equation, with probability $1-\delta$,
    \begin{equation}
        \begin{aligned}
              \mathcal{J}_{PDE}^2(\mathbf{v}_N) &= \int_{(0,T)\times D} |\partial_t (\rho_N u_n -\rho u)+ \partial_x (\rho_N u_N^2 -\rho u^2) - \mu\partial_x^2(u_N-u)|^2 dxdt\\[1ex] &\le \mathcal{C}_L \frac{\mathcal{M}_\psi}{N\delta} (\|\rho_N -\rho\|^2_{H^{1}_{t,x}} + (2\mu+1) \|u_N-u\|^2_{H^{1}_t H^{2}_x}),
        \end{aligned}
    \end{equation}
    where
    \begin{equation}
        \begin{aligned}
            \mathcal{C}_L = ~&32\|u_N\|_{W^{1,\infty}_{t,x}}^2 + 64\|u \|_{W^{1,\infty}_{t,x}}^2 + 80\|\rho\|_{W^{1,\infty}_{t,x}}^2 + 16\|\rho_N\|_{W^{1,\infty}_{t,x}}^2\|u_N\|_{W^{1,\infty}_{t,x}}^2 \\[1ex] &+ 64\|u\|_{W^{1,\infty}_{t,x}}^2\|u_N\|_{W^{1,\infty}_{t,x}}^2
        \end{aligned}
    \end{equation}   The assumption \eqref{nse-weights-assumption} ensures that $\mathcal{C}_L$ can be bounded as
    \begin{equation}
       C_L \le  (16L^2 +80L+2\mu)+ 64(L+1)\|u\|_{W^{1,\infty}_{t,x}}^2.
    \end{equation}
    Note that $\|\partial_t u\|_{L^\infty_{t,x}}$ is finite; this can be seen by rewriting the momentum equation as $\partial_tu = -u\partial_xu + \mu \rho^{-1}\partial_x^2u$, and using the regularity $u \in C(0,T; H^k(D))$ for $k \ge 5$, along with the Sobolev embedding $H^1 \hookrightarrow L^\infty$.  Then we have with probability $1-\delta$ that
    \begin{equation*}
        \begin{aligned}
            \mathcal{J}_{PDE}^2(\mathbf{v}_N) \le \frac{  \left[ 16L^2 +80L+2\mu+ 64(L+1)\|u\|_{W^{1,\infty}_{t,x}}^2 \right]\mathcal{M}_\psi }{N\delta} ( \|\rho\|^2_{H^{1+s_1}_t H^{1+s_2}_x} +  (2\mu+1) \|u\|^2_{H^{1+s_1}_t H^{2+s_2}_x}).
        \end{aligned}
    \end{equation*}
    Noting that $s_1, s_2 > 3/2 $, we get
    \begin{equation} \label{nse-final}
        \begin{aligned}
             (\mathcal{J}_{PDE}^1 + \mathcal{J}_{PDE}^2)(\mathbf{v}_N) \le  \frac{\left[ 16L^2 +80L+2\mu+ 64(L+1)\|u\|_{W^{1,\infty}_{t,x}}^2 \right]\mathcal{M}_\psi}{N\delta} ( \|\rho\|^2_{H^{3}_t H^{3}_x} +  (2\mu+1) \|u\|^2_{H^{3}_t H^{4}_x}),
        \end{aligned}
    \end{equation} which is guaranteed to be finite if we take $(\rho_0, u_0) \in (H^k(D))^2$ for $k \ge 5$. This can be seen by noting that the continuity equation can be re-written as $\partial_t \rho = -u\partial_x\rho - \rho \partial_x u$, and the momentum equation as $\partial_tu = -u\partial_xu + \mu \rho^{-1}\partial_x^2u$. Then using the regularity \eqref{nse-class}, one can bound the higher order Sobolev norms on the right-hand side of \eqref{nse-final}.
\end{proof}

\section{Experiments} \label{sec:experiments}
In this section we give further details on the numerical experiments carried out in Section \ref{sec:experiments-main}. All experiments were performed using a NVIDIA RTX 3080 GPU with 10GB VRAM.
\subsection{Porous Medium Equation}
Recall that we consider the Barenblatt-Kompaneets-Zeldovich solution
\begin{equation} 
    u(t,x) = \frac{1}{t^\alpha} \left( b - \frac{m-1}{2m} \beta \frac{\|x\|^2}{t^{2\beta}} \right)^{\frac{1}{m-1}}_+,
\end{equation}
where $\|\cdot\|$ is the $\ell^2$ norm, $(\cdot)_+$ is the positive part and
$\alpha = \frac{d}{d(m-1)+2}$ and $\beta = \frac{1}{d(m-1)+2}$. We set $m=2$.

This solution is compactly supported but not differentiable at the edges of the support, which causes difficulty for numerical schemes. Since this solution is generated by a Dirac delta-valued initial data, we fix a small time $t_0 = 10^{-2}$ and take the initial data to be $u_0(x) := u(t_0, x)$ and solve on the shifted time domain $(t_0, T+t_0)$. 



We aim to investigate whether a convergence rate of $N^{-1/2}$ can be observed in practice. To this end, we train a RaNN to approximate the Barenblatt profile \eqref{barenblatt} in dimensions $d=1,...,5$. In each dimension, we take a set of widths $N \in \{N_1, ..., N_{k} \}$, train the network for each width and plot the relative error of the final network against the true solution. The RaNN includes a Fourier feature layer, where frequencies $\omega_j$ are sampled from $\mathcal{N}(0,10^2)$. For each dimension $d$ and width $N$, we sample $M = 10N$ points (to ensure the problem remains well-posed) with a mixed strategy: $50\%$ of the points are sampled uniformly and $50\%$ are sampled uniformly on $[0.2, 0.8]^d$, which is a box focused on the support of the solution. Then we find weights $\mathbf{W} =\{ W_i \}_{i=1}^N$ which minimise the Ridge regression loss

\begin{equation} \mathcal{L}(\mathbf{W}) = \frac{1}{M} \sum_{i=1}^M \| \hat{\mathbf{u}}(t_i,\mathbf{x}_i) - \mathbf{y}_i \|_2^2 + \lambda \| \mathbf{W} \|_{2}^{2}, \qquad \lambda = 10^{-5}. \end{equation}

We compute the closed-form solution $\hat{\mathbf{W}}$ directly (using a Cholesky decomposition), and then evaluate the relative error. We train the RaNN five times for each width. The mean relative $L^2$ errors are plotted against the widths in Figure \ref{fig:errors-PME}. The errors are also plotted on log-log scales in Figure \ref{fig:errors-PME-log}. The key observation here is that the RaNN error points lie close to the $C/\sqrt{N}$ curve, which supports the upper bound of Theorem \ref{thm1}. Note that in $d=1,2,3$ we take $t_0 = 0.1$ (i.e. the initial data is set to $u(t_0, x)$, where $u$ is the Barenblatt profile from \eqref{barenblatt}) and in $d=4,5$ this is reduced to $t_0 = 0.01$ to keep the problem computationally manageable.
\begin{remark}[Smoothness of Barenblatt profile]
    Note that the lack of smoothness of the Barenblatt profile at the edges of the support places this experiment outside of the assumptions in Corollary \ref{cor:PME}. Nonetheless, if we restrict the solution to a compact subset strictly contained within the support, the result does apply. The full-domain experiment can therefore be viewed as a setting which slightly exceeds the strict theoretical assumptions. 
\end{remark}

\begin{remark}[Sampling distributions used in the experiments]
    In our experiments, the weights of the randomised neural networks are sampled from a Gaussian distribution rather than the distribution $\pi$ \eqref{pi} used in Theorem \ref{thm1}. This choice was made in order to test the robustness of the approximation rate beyond the specific (and somewhat unusual) $\pi$ used in Theorem \ref{thm1}. Empirically, we observe consistent $N^{-1/2}$ scaling behaviour which supports the relevance of our theory when sampling from a more practical Gaussian distribution.
\end{remark}

\begin{figure}[h]
    \centering
    \begin{subfigure}{0.45\textwidth}
        \includegraphics[width=\linewidth]{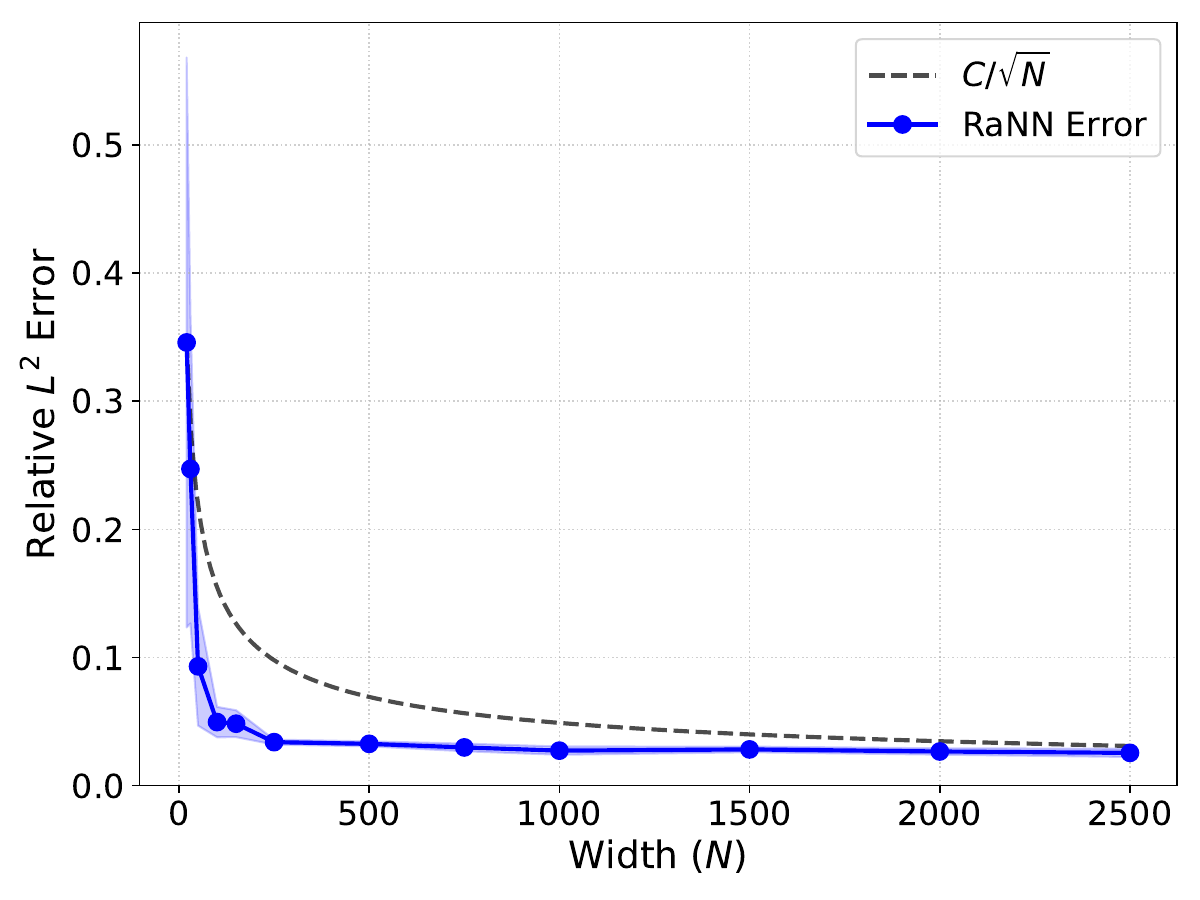}
        \caption{$d = 1$}
    \end{subfigure}
    \begin{subfigure}{0.45\textwidth}
        \includegraphics[width=\linewidth]{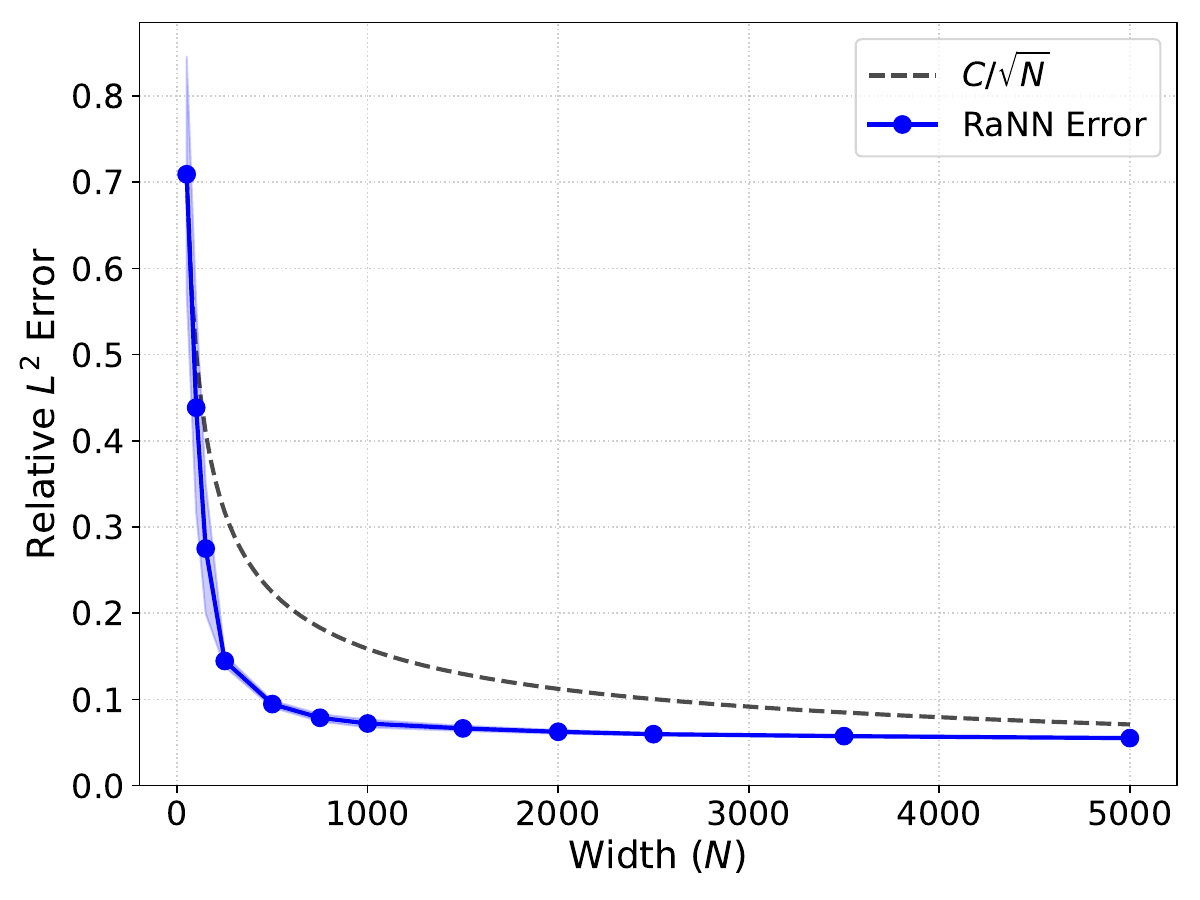}
        \caption{$d = 2$}
    \end{subfigure}
    \begin{subfigure}{0.45\textwidth}
        \includegraphics[width=\linewidth]{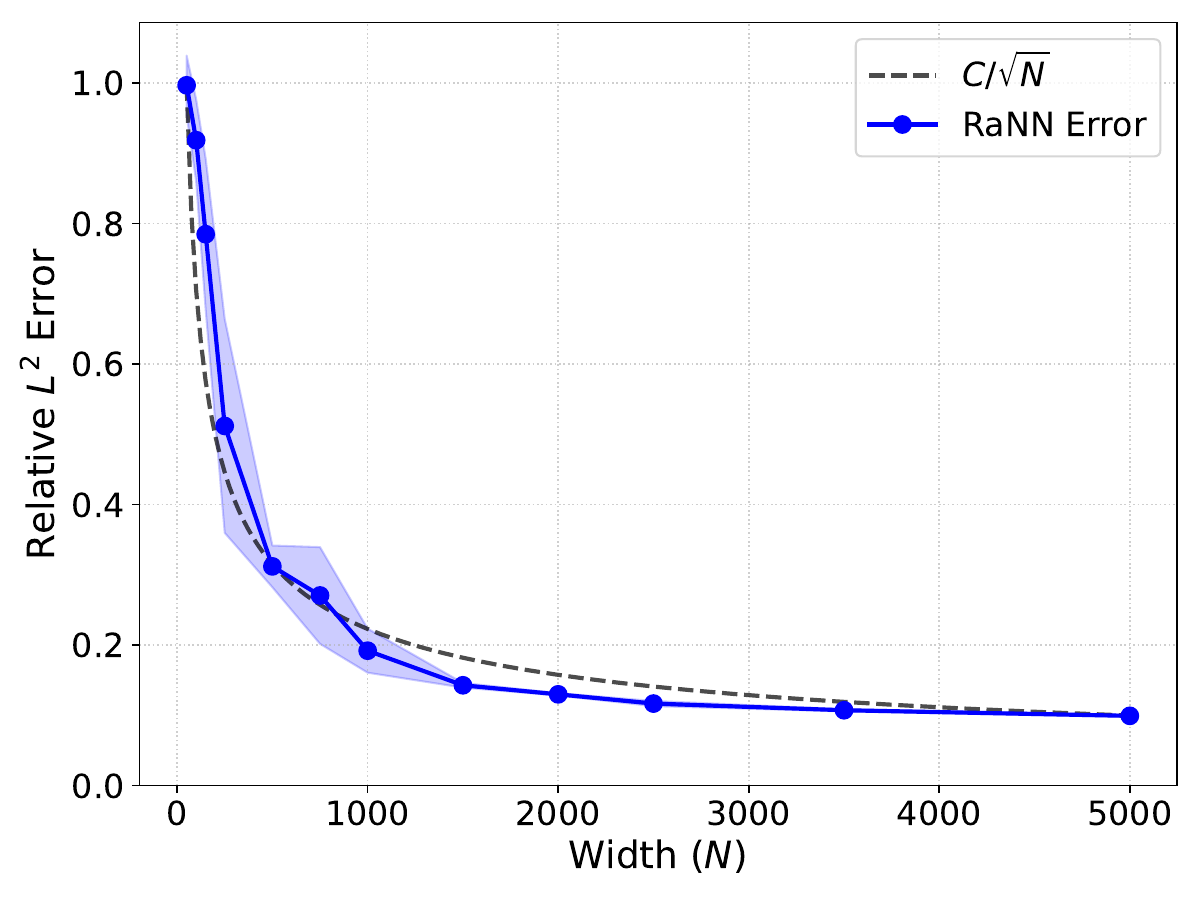}
        \caption{$d = 3$}
    \end{subfigure}

    \begin{subfigure}{0.45\textwidth}
        \includegraphics[width=\linewidth]{d4.pdf}
        \caption{$d = 4$}
    \end{subfigure}
    \begin{subfigure}{0.45\textwidth}
        \includegraphics[width=\linewidth]{d5.pdf}
        \caption{$d = 5$}
    \end{subfigure}
    \begin{subfigure}{0.45\textwidth}
        \centering
        \caption*{} 
    \end{subfigure}

    \caption{Approximation error of RaNNs of varying width  for solving PMEs in dimensions $d = 1,\ldots,5$. The shaded band indicates the region within one standard deviation of the mean relative $L^2$ error.}
    \label{fig:errors-PME}
\end{figure}

\begin{figure}[h]
    \centering
    \begin{subfigure}{0.45\textwidth}
        \includegraphics[width=\linewidth]{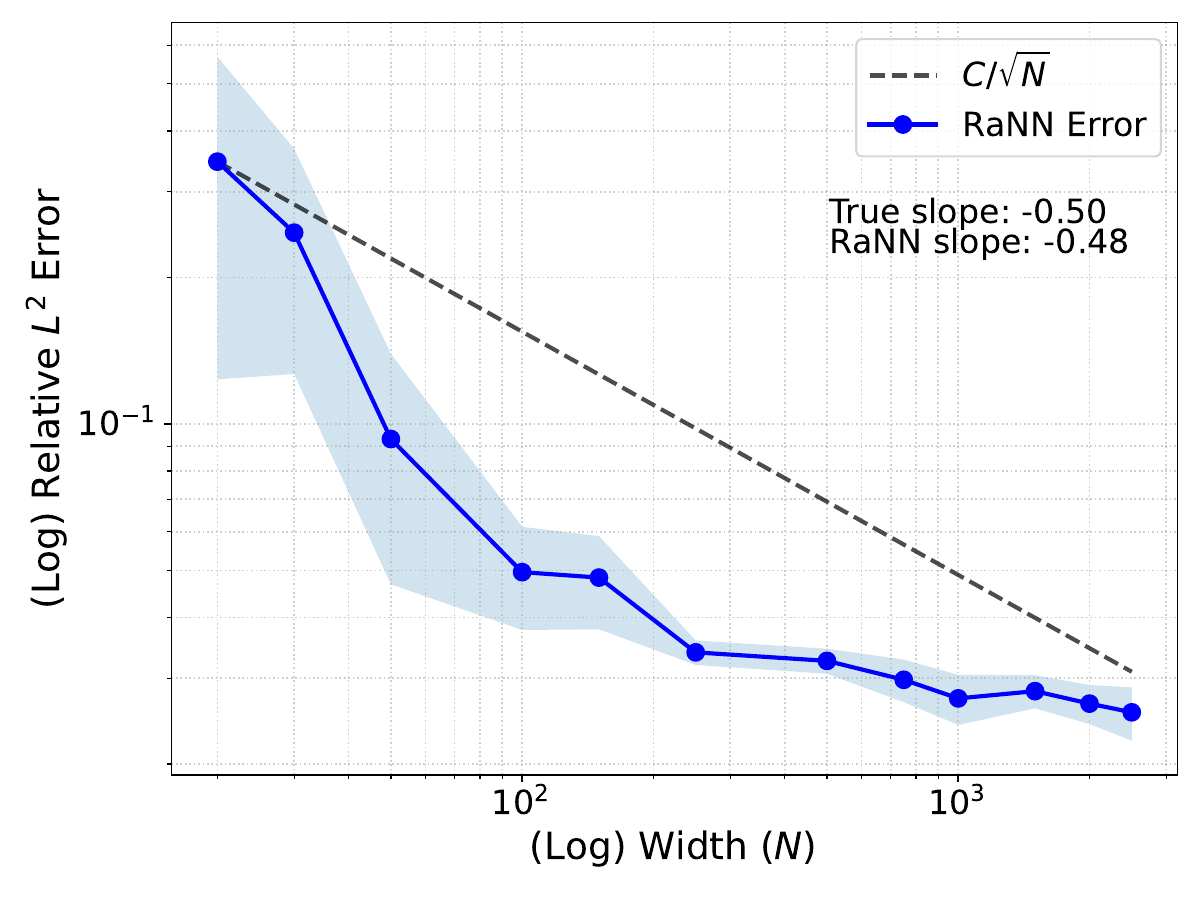}
        \caption{$d = 1$}
    \end{subfigure}
    \begin{subfigure}{0.45\textwidth}
        \includegraphics[width=\linewidth]{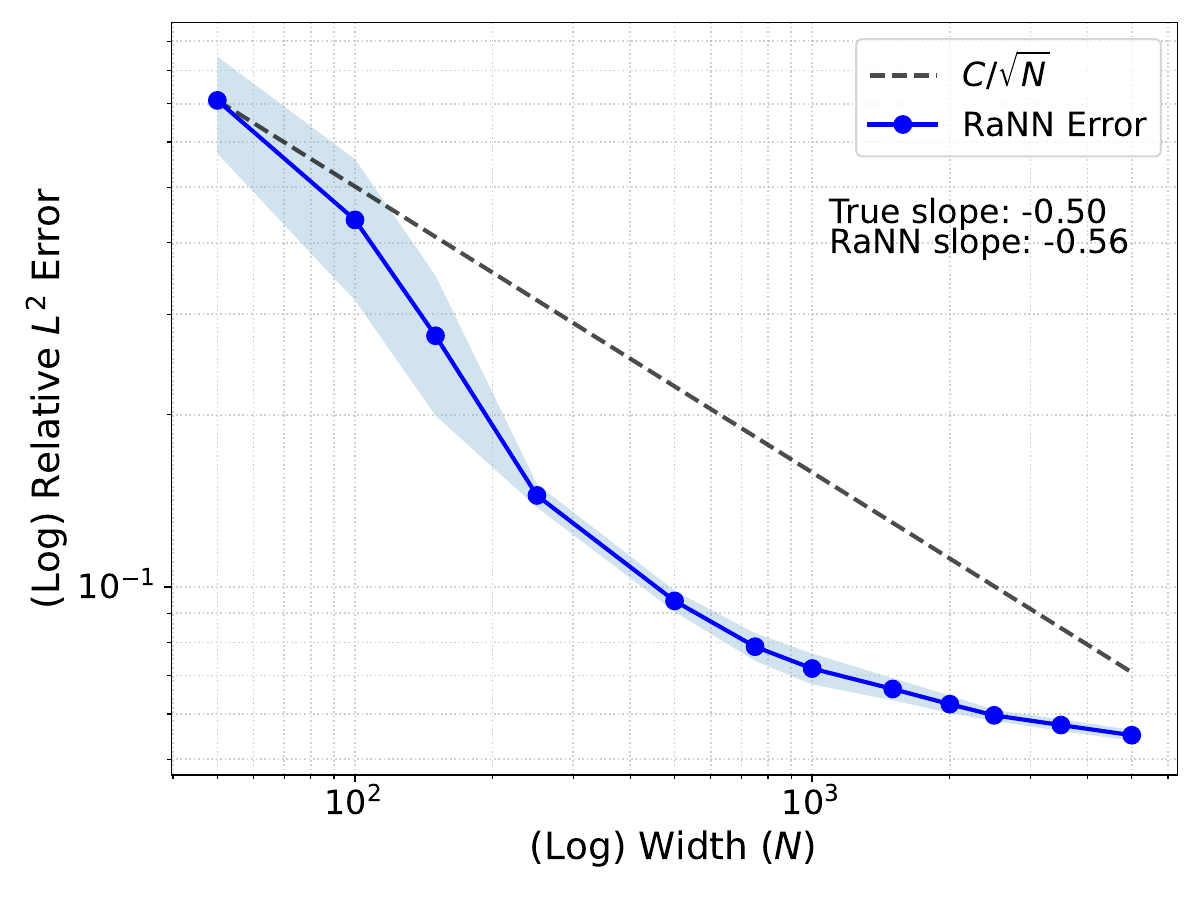}
        \caption{$d = 2$}
    \end{subfigure}
    \begin{subfigure}{0.45\textwidth}
        \includegraphics[width=\linewidth]{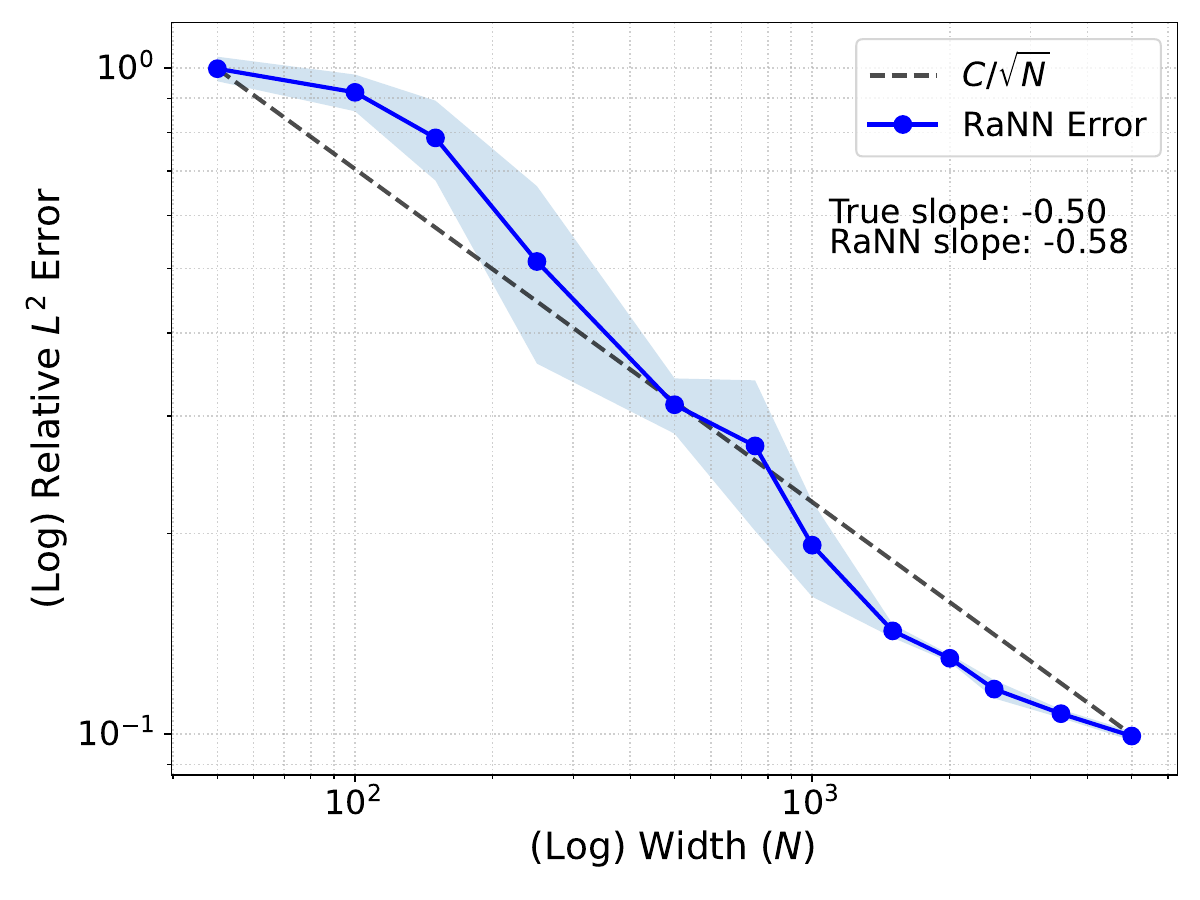}
        \caption{$d = 3$}
    \end{subfigure}

    \begin{subfigure}{0.45\textwidth}
        \includegraphics[width=\linewidth]{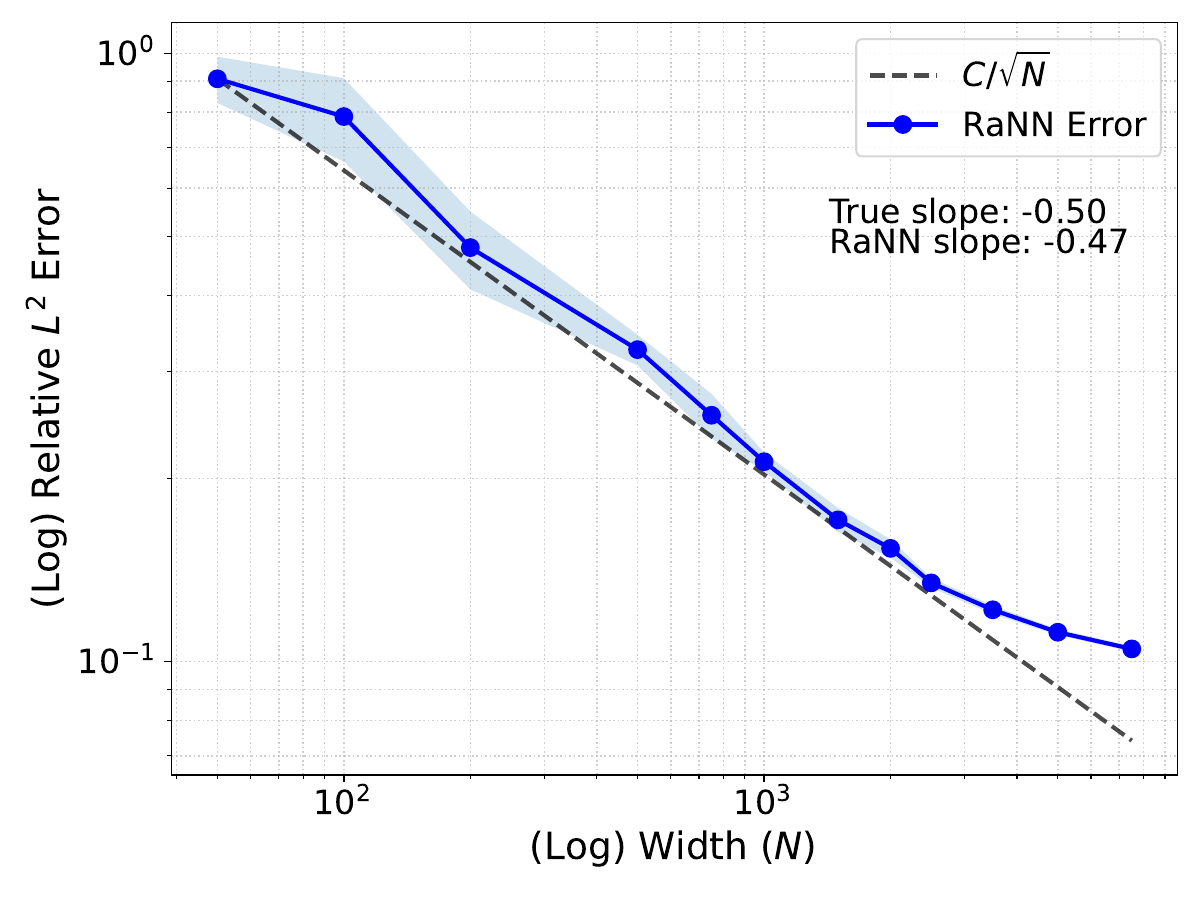}
        \caption{$d = 4$}
    \end{subfigure}
    \begin{subfigure}{0.45\textwidth}
        \includegraphics[width=\linewidth]{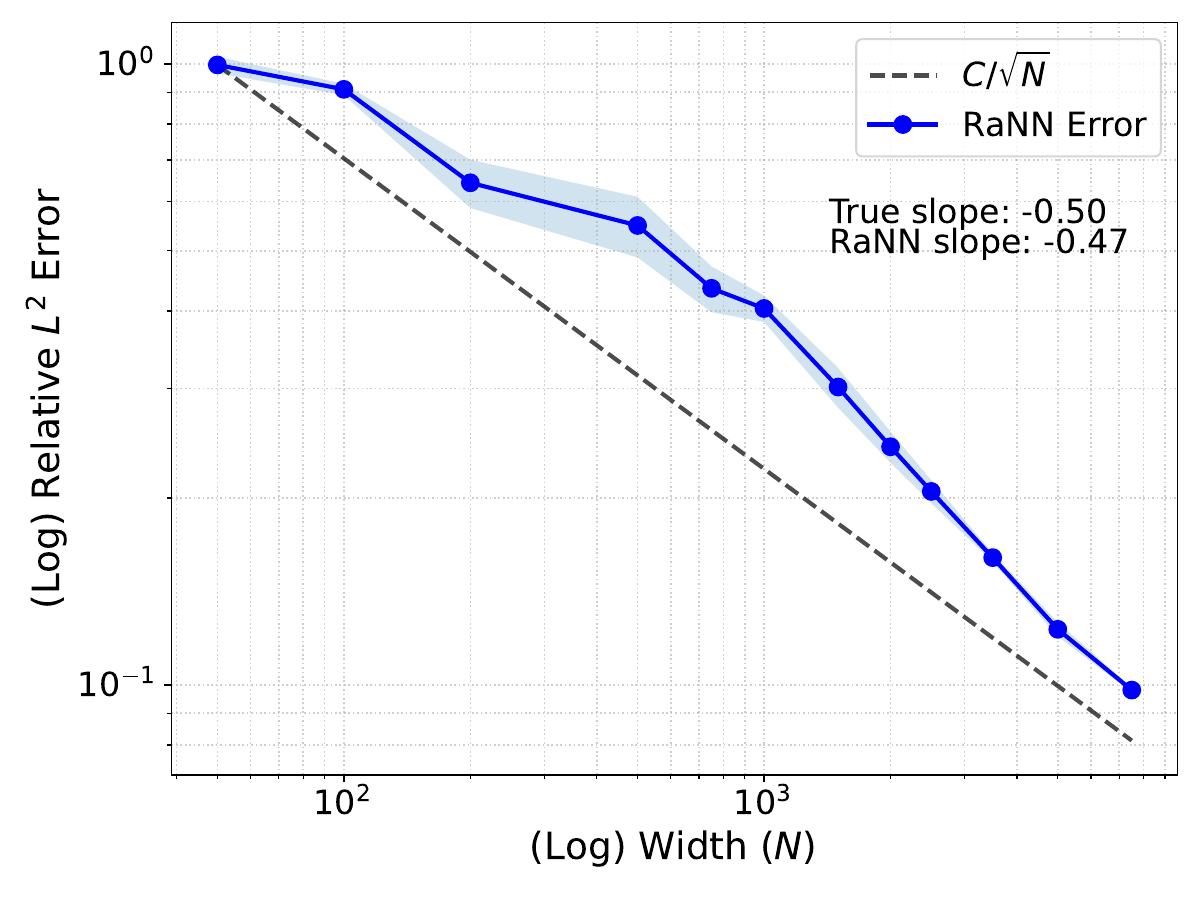}
        \caption{$d = 5$}
    \end{subfigure}
    \begin{subfigure}{0.45\textwidth}
        \centering
        \caption*{} 
    \end{subfigure}

    \caption{Log-log plot of the relative $L^2$ error versus the width $N$. The reference scaling $C/\sqrt{N}$ and the measured RaNN slope are shown. The shaded band indicates the region within one standard deviation of the mean relative $L^2$ error.}
    \label{fig:errors-PME-log}
\end{figure}

\subsection{Navier-Stokes} \label{apdx-NSE}
We now turn to the compressible Navier-Stokes system, given by \eqref{NSE}. We restrict our attention to the one-dimensional case, where \eqref{NSE} is reduced to (in Lagrangian mass coordinates)
\begin{equation} \label{lagrangian}
\begin{cases}
    \partial_t v - \partial_x u = 0, \\[1ex]
    \partial_t u + \partial_x p_\epsilon (v) - \mu \partial_x \left( \frac{1}{v}\partial_x u \right) = 0.
\end{cases}
\end{equation} Here, $v = 1/\rho$ represents the specific volume and $u$ the velocity. We consider the singular pressure in the work of \cite{dalibard2020existence}.
\begin{equation}
    p_\epsilon(v) = \frac{\epsilon}{(v-1)^\gamma}, \qquad \gamma > 0.
\end{equation}
In order to evaluate the performance of the randomised PINN method, we need a baseline solution analogous to the Barenblatt profile (\eqref{barenblatt}) which we used for the Porous Medium Equation. For this purpose, we consider the travelling shock-wave solutions to system \eqref{lagrangian} for this system, which were studied by \cite{dalibard2020existence}. Shock wave solutions to compressible Navier-Stokes models have also been studied in other works (\cite{mascia2004stability,dalibard2021local,humpherys2010stability,vasseur2016nonlinear}). The travelling wave solutions to system \eqref{lagrangian} can be obtained by taking the ansatz $(v,u)(t,x) = (\mathfrak{v}, \mathfrak{u})(x- st)$, where $s$ is the shock speed. This reduces the PDE to an ODE for $\mathfrak{v}$:
\begin{equation}
    \mathfrak{v} ' = \frac{\mathfrak{v}}{\mu s} (s^2 (v_{-} - \mathfrak{v}) + p_\epsilon (v_{-}) - p_{\epsilon}(\mathfrak{v})). \label{ODE}
\end{equation}

The velocity $\mathfrak{u}$ can then be obtained from the relationship $\mathfrak{v} = - s \mathfrak{u}$ which follows from the conservation of mass.

We consider the domain $(0,T) \times (-5,5)$ with $T=1.0$, $\mu =1,\epsilon = 10^{-3}, \gamma = 2$. The shock profile connects a far-field state $v_-$ to the far-field state $v_+$. We fix $v_+ = 1.5$, while $v_{-} < v_+$ and the shock speed $s$ are derived from the Rankine-Hugoniot jump condition (see Proposition 1.1 of \cite{dalibard2020existence}).

To obtain the baseline solution $(\mathfrak{v}, \mathfrak{u})$, we numerically integrate the ODE using the scipy.integrate.odeint solver on an interval $\xi \in[-5,5]$ with $5000$ points. The velocity profile $\mathfrak{u}(\xi)$ is then obtained from $\mathfrak{v} = - s \mathfrak{u}$. The travelling wave solution can be seen below in Figure \ref{fig:TW}.
\begin{figure}
    \centering
    \includegraphics[width=\linewidth]{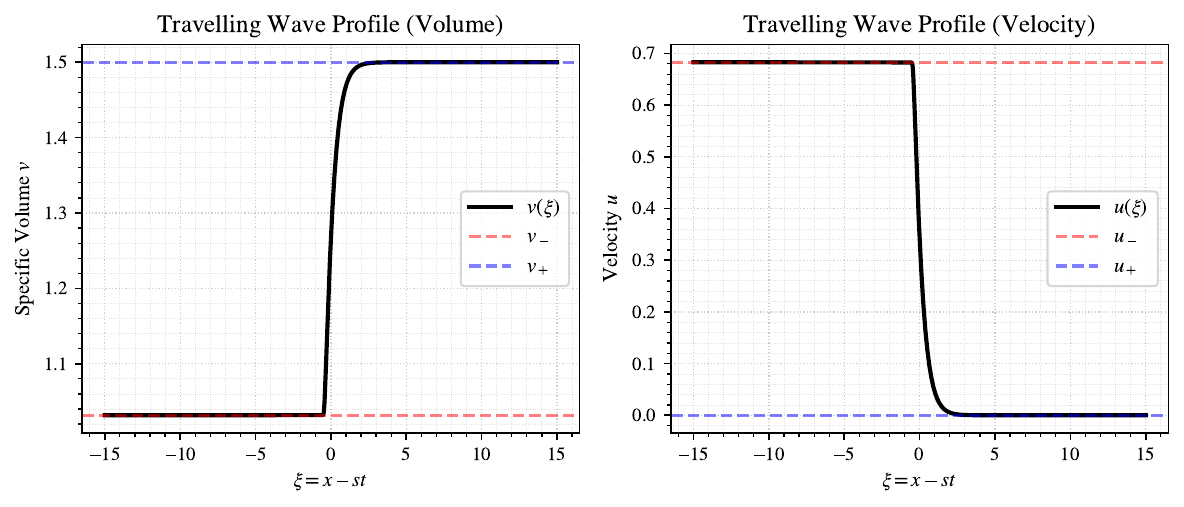}
    \caption{The travelling wave solution $(\mathfrak{v}, \mathfrak{u})$ to $\eqref{lagrangian}$, obtained by solving the ODE \eqref{ODE}.}
    \label{fig:TW}
\end{figure}

In an effort to minimise optimisation error and adhere to the setting of Theorem \ref{thm1} as closely as possible, we choose to use a supervised learning approach when finding a Randomised Neural Network. We include Fourier features (see Appendix \ref{RFNN} for details) where frequencies $\omega_j$ are drawn from $\mathcal{N}(0, 3.5^2)$. Note that we choose to use a smaller variance than for the PME because the solution is of lower frequency than the Barenblatt profile.

We sweep across a range of widths $N \in \{10, ..., 250\}$ and aim to minimise the mean-squared error between the network  and the solution to the ODE. For each width, we train the model using Ridge regression on a dataset where the sample size is $M=2000N$ ($M$ grows with $N$ to avoid an underdetermined problem, ensuring the optimisation problem remains stable). For any given training point $(t_i, x_i)$, the evaluation of the baseline solution $(\mathfrak{v}, \mathfrak{u})$ is obtained by linear interpolation on the ODE grid. We sample uniformly in time but use a mixture of uniform and importance sampling in space; $50\%$ of points are sampled uniformly on $[-5,5]$ whereas $50\%$ of points are sampled from a normal distribution $N(x_0, 1)$ around the shock location $x_0(t) = x_0 - st$.  The frequencies for the Fourier features $\omega_j$ are sampled from $\mathcal{N}(0, 3.5^2)$. We find the smaller variance of $3.5^2$ to be effective for the simpler behaviour of a travelling wave solution.

The network is trained to minimise the $L^2$-regularised MSE (Ridge regression loss) :
\begin{equation} \mathcal{L}(\mathbf{W}) = \frac{1}{M} \sum_{i=1}^M \| \hat{\mathbf{v}}(t_i,\mathbf{x}_i) - \mathbf{y}_i \|_2^2 + \lambda \| \mathbf{W} \|_{2}^{2}, \qquad \lambda = 10^{-3}, \end{equation}
where $\hat{\mathbf{v}}$ is the network output and $\mathbf{W}$ is the vector of output weights. The minimiser $\hat{\mathbf{W}}$ has a closed-form which can be explicitly calculated and used to generate the final network. With the final network, we compute the $L^2$ errors relative to the baseline solution using a set of $20,000$ (pre-generated) points. Each width $N$ is tested five times and the mean relative error is recorded for each $N$. These errors are plotted against $N$ in Figure \ref{fig:NS}. The errors are plotted on a log-log scale in Figure \ref{fig:NS-log}.
\begin{figure}
    \centering
    \includegraphics[width=0.5\linewidth]{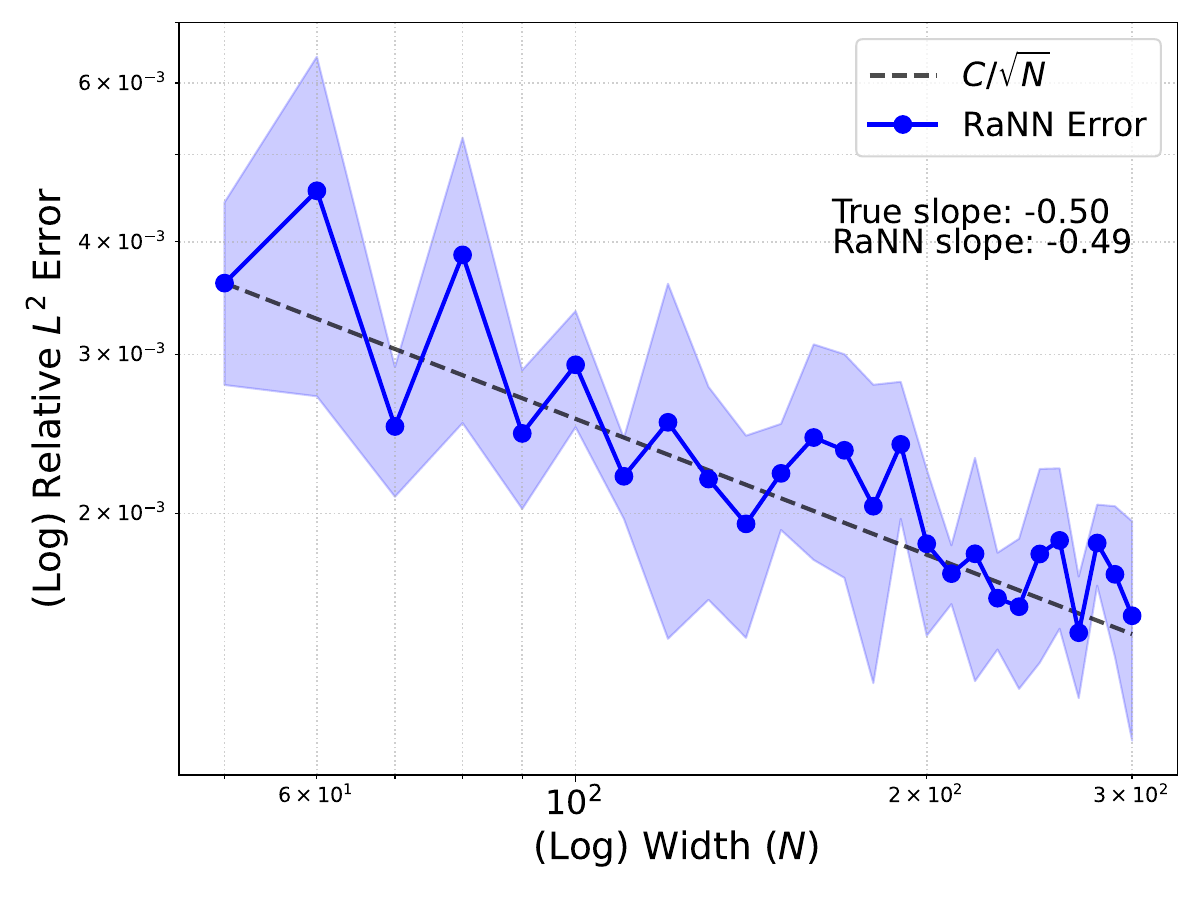}
    \caption{Approximation error of RaNNs of varying width $N$ for solving the compressible Navier-Stokes system on $(0,1) \times [-5,5]$ in logarithmic scale.}
    \label{fig:NS-log}
\end{figure}
\section{Random Feature Neural Networks} \label{RFNN}
It is known that classical PINNs suffer from a spectral bias phenomenon \cite{rahaman2019spectral}, which essentially means that the network is biased towards learning lower frequency functions (see\cite{wang2022respecting, wang2023expert, xu2019frequency}). This can be troublesome, particularly for non-linear PDEs whose solutions are often highly chaotic. \cite{tancik2020fourier} suggested to use Fourier features to overcome this. 

To carry out the simulations in Section \ref{sec:PDE}, we integrated Fourier feature embeddings into the RaNN network. We now describe the architecture of a network with Fourier feature embeddings. Instead of choosing a smooth activation such as $\tanh$, we take $\sigma(z) = \cos(z)$ and include both cosine and sine activations for symmetry. Concretely, let $\{\tau_i, \mathbf{a}_i\}_{i=1}^N \subset \mathbb{R}^{1+d}$ and $\{b_i\}_{i=1}^N \subset [0,2\pi]$ be frozen random samples, and define
\begin{equation}
\phi_i(t,x) = \cos(\tau_it + \mathbf{a}_i \cdot x + b_i), \qquad 
\psi_i(t,x) = \sin(\tau_it + \mathbf{a}_i \cdot x + b_i).
\end{equation}
We then construct the feature vector
\begin{equation}
\Phi(t,x) = \frac{1}{\sqrt{N}}\big(\phi_1(t,x),\dots,\phi_N(t,x),\psi_1(t,x),\dots,\psi_N(t,x)\big),
\end{equation}
and take
\begin{equation}\label{uWfourier}
u_W(t,x) = \beta + \sum_{i=1}^{N} \big(a_i \phi_i(t,x) + c_i \psi_i(t,x)\big),
\end{equation}
where the coefficients $\{a_i,c_i\}_{i=1}^N$ and bias $\beta$ are the trainable parameters. The prefactor $N^{-1/2}$ normalises the variance of the features, and does not affect the approximation class.

We can express the sum of sine and cosine functions as a single shifted cosine with amplitude $W_i$, giving us the form
\begin{equation}
u_W(t,x) = \beta + \sum_{i=1}^N W_i \cos(\tau_i t + \mathbf{a}_i \cdot \mathbf{x}_i + \tilde{b}_i),
\end{equation}
where $\tilde{b}_i := b_i - \theta_i$. This shows that the RaNN used in our experiments is of the same general form as \eqref{RANN}, with smooth activation $\sigma=\cos$. Recall from Remark \ref{rmk:tanh} that $\sigma = \cos$ is an admissible choice, meaning that the approximation result of Theorem \ref{thm1} directly applies to networks of the form \eqref{uWfourier}.

\end{document}